\newcommand\hcancel[2][black]{\setbox0=\hbox{$#2$}%
\rlap{\raisebox{.25\ht0}{\textcolor{#1}{\rule{0.7\wd0}{0.75pt}}}}#2} 
\newcommand\hcancelt[2][black]{\setbox0=\hbox{$#2$}%
\rlap{\raisebox{.25\ht0}{\textcolor{#1}{\hspace{0.3mm}\rule{0.7\wd0}{0.75pt}}}}#2} 
\newtheorem{thm}{Theorem}[section]
\newtheorem{rem}{Remark}[section]
\theoremstyle{definition}
\numberwithin{algorithm}{section}
\numberwithin{equation}{section}
\renewcommand{\theequation}{\thesection.\arabic{equation}}
\def\simgt{\,\hbox{\lower0.6ex\hbox{$>$}\llap{\raise0.3ex\hbox{$\sim$}}}\,}
\def\simlt{\,\hbox{\lower0.6ex\hbox{$<$}\llap{\raise0.3ex\hbox{$\sim$}}}\,}
\def\simgteq{\,\hbox{\lower0.6ex\hbox{$\ge$}\llap{\raise0.6ex\hbox{$\sim$}}}\,}
\def\simlteq{\,\hbox{\lower0.6ex\hbox{$\le$}\llap{\raise0.6ex\hbox{$\sim$}}}\,}
\def\applteq{\,\hbox{\lower0.6ex\hbox{$\le$}\llap{\raise0.8ex\hbox{$\approx$}}}\,}
\def\applt{\,\hbox{\lower0.6ex\hbox{$<$}\llap{\raise0.5ex\hbox{$\approx$}}}\,}
\DeclareMathAlphabet\mathbfcal{OMS}{cmsy}{b}{n}
\def\user@resume{resume}
\def\user@intermezzo{intermezzo}
\newcounter{previousequation}
\newcounter{lastsubequation}
\newcounter{savedparentequation}
\newcommand{\C}[1]{\mathcal{#1}}
\newcommand{\F}[1]{\mathbf{#1}}
\newcommand{\bsC}[1]{\boldsymbol{\C{#1}}}
\newcommand{\MB}[1]{\mathbb{#1}}
\newcommand{\MBS}{\MB{S}}
\newcommand{\MBG}{\MB{G}}
\newcommand{\MBR}{\mathbb{R}}
\newcommand{\MBRP}{\MBR^+}
\newcommand{\MBRzer}{\MBR_0}
\newcommand{\MBRzerP}{\MBRzer^+}
\newcommand{\MBZ}{\mathbb{Z}}
\newcommand{\MBZP}{\MBZ^+}
\newcommand{\MBZzer}{\MBZ_0}
\newcommand{\MBZzerP}{\MBZzer^+}
\newcommand{\MBZe}{\MBZ_e}
\newcommand{\MBZeP}{\MBZe^+}
\newcommand{\MBZzereP}{\MBZ_{0,e}^+}
\newcommand{\MBZOP}{\MBZ_{o}^+}
\newcommand{\MBT}{\mathbb{T}}
\newcommand{\MBJ}{\mathbb{J}}
\newcommand{\MBN}{\mathbb{N}}
\newcommand{\MFF}{\mathfrak{F}}
\newcommand{\MFP}{\mathfrak{P}}
\newcommand{\MFC}{\mathfrak{C}}
\newcommand{\MFK}{\mathfrak{K}}
\newcommand{\MFX}{\mathfrak{X}}
\newcommand{\MFU}{\mathfrak{U}}
\newcommand{\canczer}[1]{#1_{\hcancel{0}}}
\newcommand{\cancbra}[1]{\hcancel{[}#1\hcancelt{]}}
\newcommand{\sumd}{\sideset{}{'}}
\newcommand{\bmc}{\bm{c}}
\newcommand{\bmx}{\bm{x}}
\newcommand{\bmt}{\bm{t}}
\newcommand{\bmz}{\bm{z}}
\newcommand{\txs}{\tilde{x}^{*}}
\newcommand{\tus}{\tilde{u}^{*}}
\newcommand{\tbmx}{\tilde{\bmx}}
\newcommand{\tbmu}{\tilde{\bmu}}
\newcommand{\tbmxs}{\tbmx^*}
\newcommand{\tbmus}{\tbmu^*}
\newcommand{\bmy}{\bm{y}}
\newcommand{\bmh}{\bm{h}}
\newcommand{\bmu}{\bm{u}}
\newcommand{\bmf}{\bm{f}}
\newcommand{\bmzer}{\bm{\mathit{0}}}
\newcommand{\bmone}{\bm{\mathit{1}}}
\newcommand{\bmX}{\bm{X}}
\newcommand{\hbmz}{\hat\bmz}
\newcommand{\hz}{\hat{z}}
\newcommand{\FOmega}{\F{\Omega}}
\newcommand{\foralla}{\,\forall_{\mkern-6mu a}\,}
\newcommand{\foralle}{\,\forall_{\mkern-6mu e}\,}
\newcommand{\foralls}{\,\forall_{\mkern-6mu s}\,}
\newcommand{\Def}[1]{\text{Def}\left(#1\right)}
\newcommand{\GLD}[4]{{}_{\;\;#1}^{GL}D_{#2}^{#3}{#4}}
\newcommand{\RLD}[4]{{}_{\;\;#1}^{RL}D_{#2}^{#3}{#4}}
\newcommand{\CD}[4]{{}_{#1}^CD_{#2}^{#3}{#4}}
\newcommand{\MRLD}[4]{{}_{\hspace{3.6mm}#1}^{MRL}D_{#2}^{#3}{#4}}
\newcommand{\MCD}[4]{{}_{\;\;#1}^{MC}D_{#2}^{#3}{#4}}
\newcommand{\MD}[4]{{}_{#1}^{M}D_{#2}^{#3}{#4}}
\newcommand{\ED}[4]{{}_{#1}^{E}D_{#2}^{#3}{#4}}
\def\BState{\State\hskip-\ALG@thistlm}
    \newcommand*{\algrule}[1][\algorithmicindent]{\makebox[#1][l]{\hspace*{.5em}\thealgruleextra\vrule height \thealgruleheight depth \thealgruledepth}}%
\newcommand*{\thealgruleextra}{}
\newcommand*{\thealgruleheight}{.75\baselineskip}
\newcommand*{\thealgruledepth}{.25\baselineskip}
\def\ALG@printindent{%
    \ifnum \theALG@nested>0
        \ifx\ALG@text\ALG@x@notext
        \else
            \unskip
            \addvspace{-1pt}
            \ALG@printindent@tempcnta=1
            \loop
                \algrule[\csname ALG@ind@\the\ALG@printindent@tempcnta\endcsname]%
                \advance \ALG@printindent@tempcnta 1
            \ifnum \ALG@printindent@tempcnta<\numexpr\theALG@nested+1\relax
            \repeat
        \fi
    \fi
    }%
\patchcmd{\ALG@doentity}{\noindent\hskip\ALG@tlm}{\ALG@printindent}{}{\errmessage{failed to patch}}
\newbox\statebox
\newcommand{\myState}[1]{%
    \setbox\statebox=\vbox{#1}%
    \edef\thealgruleheight{\dimexpr \the\ht\statebox+1pt\relax}%
    \edef\thealgruledepth{\dimexpr \the\dp\statebox+1pt\relax}%
    \ifdim\thealgruleheight<.75\baselineskip
        \def\thealgruleheight{\dimexpr .75\baselineskip+1pt\relax}%
    \fi
    \ifdim\thealgruledepth<.25\baselineskip
        \def\thealgruledepth{\dimexpr .25\baselineskip+1pt\relax}%
    \fi
    \State #1%
    \def\thealgruleheight{\dimexpr .75\baselineskip+1pt\relax}%
    \def\thealgruledepth{\dimexpr .25\baselineskip+1pt\relax}%
}
\newcommand{\oset}[3][0ex]{%
  \mathrel{\mathop{#3}\limits^{
    \vbox to#1{\kern-2\ex@
    \hbox{$\scriptstyle#2$}\vss}}}}
\begin{document}
\begin{frontmatter}
\title{Fourier-Gegenbauer Pseudospectral Method for Solving Periodic Fractional Optimal Control Problems}
\author[XMUM,Assiut]{Kareem T. Elgindy\corref{cor1}}
\ead{kareem.elgindy@(xmu.edu.my;gmail.com)}
\address[XMUM]{Mathematics Department, School of Mathematics and Physics, Xiamen University Malaysia, Sepang 43900, Malaysia}
\address[Assiut]{Mathematics Department, Faculty of Science, Assiut University, Assiut 71516, Egypt}
\cortext[cor1]{Corresponding author}

\begin{abstract}
This paper introduces a new accurate model for periodic fractional optimal control problems (PFOCPs) using Riemann-Liouville (RL) and Caputo fractional derivatives (FDs) with sliding fixed memory lengths. The paper also provides a novel numerical method for solving PFOCPs using Fourier and Gegenbauer pseudospectral methods. By employing Fourier collocation at equally spaced nodes and Fourier and Gegenbauer quadratures, the method transforms the PFOCP into a simple constrained nonlinear programming problem (NLP) that can be treated easily using standard NLP solvers. We propose a new transformation that largely simplifies the problem of calculating the periodic FDs of periodic functions to the problem of evaluating the integral of the first derivatives of their trigonometric Lagrange interpolating polynomials, which can be treated accurately and efficiently using Gegenbauer quadratures. We introduce the notion of the $\alpha$th-order fractional integration matrix with index $L$ based on Fourier and Gegenbauer pseudospectral approximations, which proves to be very effective in computing periodic FDs. We also provide a rigorous priori error analysis to predict the quality of the Fourier-Gegenbauer-based approximations to FDs. The numerical results of the benchmark PFOCP demonstrate the performance of the proposed pseudospectral method.
\end{abstract}
\begin{keyword}
Fourier collocation; Fractional optimal control; Gegenbauer quadrature; Periodic fractional derivative;  Pseudospectral method.
\end{keyword}
\end{frontmatter}

\section{Introduction}
\label{Int}
Optimal control (OC) theory remains an active area of research, with numerous applications in engineering, computer science, physics, astronomy, materials science, decision sciences, and many other fields. Most studies related to this area have focused on OC problems (OCPs) governed by integer order dynamical systems equations; however, the past decade has witnessed  increasing interest and rapid expansion of research works on fractional OCPs (FOCPs) as a viable alternative to classical OCPs, motivated by the fact that modeling various dynamic systems can be described more accurately using systems of fractional differential equations (FDEs); cf. \cite{agrawal2004general,agrawal2006formulation,agrawal2007hamiltonian,
li2008fractional,jelicic2009optimality,almeida2015discrete,
marzban2019solution,dehestani2022spectral}.

This paper presents a new formulation and numerical solution scheme for a class of periodic FOCPs (PFOCPs) governed by fractional dynamic systems. Fractional dynamic systems are systems whose dynamics are described by FDEs, and PFOCP is an OCP described by fractional dynamic systems with periodic solutions. Classical Riemann-Liouville (RL) and Caputo fractional derivatives (FDs) of non-constant $T$-periodic functions are not $T$-periodic, as shown earlier by \citet{tavazoei2010note}, limiting their use to model PFOCPs. This limitation triggers the crucial need to define new fractional differentiation operators that can preserve the periodicity of a periodic function and allow for the existence of periodic solutions to PFOCPs. 

One approach to defining such operators is to modify the classical RL and Caputo fractional differentiation formulas by fixing their memory length and varying their lower terminals, as recently introduced by \citet{bourafa2021periodic}, giving rise to a periodic fractional differentiation operator that alleviates the aforementioned limitations. We used this periodic fractional differentiation operator in this study to define a new class of PFOCPs suitable for controlling fractional dynamic systems. We demonstrate how to simplify the definition of the periodic fractional differentiation operator using a novel and clever change of variables that permits the evaluation of the FD using Fourier interpolation and Gegenbauer quadrature. For a fractional order $\alpha \in (0,1)$, the proposed transformation magically reduces the integral defining the FD to one that includes the first derivative of the trigonometric Lagrange interpolating polynomial associated with the Fourier interpolation of the periodic function. The aftermath of this procedure is the establishment of the Fourier-Gegenbauer-based pseudospectral (FGPS) fractional integration matrix (or simply the FGPSFIM) of Toeplitz structure that can deliver very accurate and efficient approximations to periodic FDs of periodic functions at any set of mesh points in their domains. Another contribution of this study is the derivation of the error bounds associated with the FGPS quadrature induced by the FGPSFIM, which highlights the quality of FGPS approximations to periodic FDs in light of some of the main parameters associated with the periodic FD and the proposed FGPS method. To the best of our knowledge, this paper introduces a new area of OC theory, which we call ``the periodic fractional OC theory.'' Moreover, this paper presents the FGPS method, which is a pioneering work to solve problems in this area numerically with exponential rates of convergence using FGPS approximations.

The remainder of this study is structured as follows. In Section \ref{sec:PN}, we provide preliminaries and notations to simplify the presentation of this paper. The PFOCP is introduced in general form in Section \ref{sec:PS1}. In Section \ref{sec:FGPMFD1}, we derive the FGPS formulas required to approximate the periodic FDs. Section \ref{sec:ECA1} presents error and convergence analyses of the derived FGPS formulas. The proposed FGPS method is introduced in Section \ref{sec:FGPM1}. We show the efficiency and accuracy of the proposed method in Section \ref{sec:NS1} followed by concluding remarks in Section \ref{sec:Conc} and some future works in Section \ref{sec:FW1}.

\section{Preliminaries and Notations}
\label{sec:PN}
The following notations are used throughout this study to abridge and simplify the mathematical formulas. Many of these notations appeared earlier in \cite{Elgindy2023a}; however, for convenience and to keep the paper self-explanatory, we summarize them below together with the new notations.

\noindent\textbf{Logical Symbols.} The  symbols $\forall, \foralla, \foralle$, and $\foralls$ stand for the phrases ``for all,'' ``for any,'' ``for each,'' and ``for some,'' in respective order.\\[0.5em]
\textbf{List and Set Notations.} $\MFC, \MFF$, and $\MFP$ denote the set of all complex-valued, real-valued, and piecewise continuous real-valued functions, respectively. Moreover, $\MBR, \MBRP, \canczer{\MBR}$, $\MBZ, \MBZP, \MBZzerP$, $\MBZOP$, $\MBZeP$, and $\MBZzereP$ denote the sets of real numbers, positive real numbers, nonzero real numbers, integers, positive integers, non-negative integers, positive odd integers, positive even integers, and non-negative even integers, respectively. When we overset any of the above sets by a right arrow we mean the subset of that set containing sufficiently large numbers; for example, $\oset{\rightarrow}{\MBZP}$ stands for the set of all sufficiently large positive integers. The notations $i$:$j$:$k$ or $i(j)k$ indicate a list of numbers from $i$ to $k$ with increment $j$ between numbers, unless the increment equals one where we use the simplified notation $i$:$k$. For example, $0$:$0.5$:$2$ simply means the list of numbers $0, 0.5, 1, 1.5$, and $2$, while $0$:$2$ means $0, 1$, and $2$. The list of symbols $y_1, y_2, \ldots, y_n$ is denoted by $\left. y_i \right|_{i=1:n}$ or simply $y_{1:n}$, and their set is represented by $\{y_{1:n}\}\,\foralla n \in \MBZP$. We define $\MBJ_n = \{0:n-1\}, \MBN_n = \{1:n\}\,\foralla n \in \MBZP$, and $\MFK_N = \{-N/2:N/2\}\,\foralla N \in \MBZeP$. $\MBS_n^{T} = \left\{t_{n,0:n-1}\right\}$ is the set of $n$ equally-spaced points such that $t_{n,j} = T j/n\, \forall j \in \MBJ_n$. $\MBG_n^{\lambda} = \left\{z_{n,0:n}^{\lambda}\right\}$ is the set of Gegenbauer-Gauss (GG) zeros of the $(n+1)$st-degree Gegenbauer polynomial with index $\lambda > -1/2$; cf. \cite{Elgindy201382}. Finally, the specific interval $[0, T]$ is denoted by $\FOmega_{T}\,\forall T > 0$; for example, $[0, t_{n,j}]$ is denoted by ${\FOmega_{t_{n,j}}}\,\forall j \in \MBJ_n$.\\[0.5em] 
\textbf{Function Notations.} $\delta_{n,m}$ is the usual Kronecker delta function of variables $n$ and $m$. $\left\lceil  \cdot  \right\rceil, \left\lfloor {\cdot} \right\rfloor$ and $\Gamma$ denote the ceil, floor, and Gamma functions, respectively. $\digamma$ denotes the digamma function defined as the logarithmic derivative of the gamma function. $\left( {\begin{array}{*{20}{c}}
\alpha \\
k
\end{array}} \right)$ is the binomial coefficient indexed by the pair $\alpha \in \MBR$ and $k \in \MBZzerP$. $\Im(\cdot)$ is the imaginary part of a complex number. $(\alpha)^{(k)} = \prod\limits_{l = 0}^{k-1} {(\alpha  - l)}$ is the $k$th-factorial power (the falling factorial) function of $\alpha \in \MBR$. For convenience, we shall denote $g(t_{n})$ by $g_n \foralla g \in \MFC, n \in \MBZ, t_n \in \MBR$, unless stated otherwise.\\[0.5em]
\textbf{Integral Notations.} We denote $\int_0^{b} {h(t)\,dt}$ and $\int_a^{b} {h(t)\,dt}$ by $\C{I}_{b}^{(t)}h$ and $\C{I}_{a, b}^{(t)}h$, respectively, $\foralla$ integrable $h \in \MFC, \{a, b\} \subset \MBR$. If the integrand function $h$ is to be evaluated at any other expression of $t$, say $u(t)$, we express $\int_0^{b} {h(u(t))\,dt}$ and $\int_a^b {h(u(t))\,dt}$ with a stroke through the square brackets as $\C{I}_{b}^{(t)}h\cancbra{u(t)}$ and $\C{I}_{a,b}^{(t)}h\cancbra{u(t)}$ in respective order.\\[0.5em] 
\textbf{Space and Norm Notations.} $\MBT_{T}$ is the space of $T$-periodic, univariate functions $\foralla T \in \MBRP$. $\Def{\FOmega}$ is the space of all functions defined on the set $\FOmega$. $C^k(\FOmega)$ is the space of $k$ times continuously differentiable functions on ${\FOmega}\,\forall k \in \MBZzerP$. ${}_T\mathfrak{X}_{n_1}^{n_2} = \{[y_0, \ldots, y_{n_1}]^{\top}: \MBRzerP \to \MBR^{n_1}\text{ s.t. }y_j \in \MBT_{T} \cap C^{n_2}(\MBRzerP) \forall j \in \MBN_{n_1}\}$ is the space of $T$-periodic, $n_2$-times continuously differentiable, $n_1$-dimensional vector functions on $\MBRzerP$. ${}_T\MFU_{n} = \{[u_0, \ldots, u_{n}]^{\top}: \MBRzerP \to \MBR^{n}\text{ s.t. }u_j \in \MBT_{T} \cap \MFP\,\forall j$ $\in \MBN_{n}\}$ is the space of $T$-periodic, $n$-dimensional piecewise continuous vector functions on $\MBRzerP$. $L^p({\FOmega})$ is the Banach space of measurable functions $u$ defined on ${\FOmega}$ such that ${\left\| u \right\|_{{L^p}}} = {\left( {{\C{I}_{\FOmega}}{{\left| u \right|}^p}} \right)^{1/p}}$ $< \infty\,\forall p \ge 1$. In particular, we write $\left\|u\right\|_{\infty}$ to denote ${\left\| u \right\|_{{L^{\infty}}}}$. Finally, $\left\|\cdot\right\|_2$ denotes the usual Euclidean norm of vectors.\\[0.5em]
\textbf{Vector Notations.} We shall use the shorthand notations $\bmt_N$ and $g_{0:N-1}$ to stand for the column vectors $[t_{0}, t_{1}, \ldots$, $t_{N-1}]^{\top}$ and $[g_0, g_1, \ldots, g_{N-1}]^{\top}\,\forall N \in \MBZP$ in respective order. In general, $\foralla h \in \MFC$ and a vector $\bmy$ whose $i$th-element is $y_i \in \MBR$, the notation $h(\bmy)$ stands for a vector of the same size and structure of $\bmy$ such that $h(y_i)$ is the $i$th element of $h(\bmy)$. Moreover, by $\bmh(\bmy)$ or $h_{1:m}\cancbra{\bmy}$ with a stroke through the square brackets, we mean $[h_1(\bmy), \ldots, h_m(\bmy)]^{\top}\,\foralla m$-dimensional column vector function $\bmh$, with the realization that the definition of each array $h_i(\bmy)$ follows the former notation rule $\foralle i$. Furthermore, if $\bmy$ is a vector function, say $\bmy = \bmy(t)$, then we write $h(\bmy(\bmt_N))$ and $\bmh(\bmy(\bmt_N))$ to denote $[h(\bmy(t_0)), h(\bmy(t_1)), \ldots, h(\bmy(t_{N-1}))]^{\top}$ and $[\bmh(\bmy(t_0)), \bmh(\bmy(t_1)), \ldots, \bmh(\bmy(t_{N-1}))]^{\top}$ in respective order.\\[0.5em] 
\textbf{Matrix Notations.} $\F{O}_n, \F{1}_n$, and $\F{I}_n$ stand for the zero, all ones, and the identity matrices of size $n$. $\F{C}_{n,m}$ indicates that $\F{C}$ is a rectangular matrix of size $n \times m$; moreover, $\F{C}_n$ denotes a row vector whose elements are the $n$th-row elements of $\F{C}$, except when $\F{C}_n = \F{O}_n, \F{1}_n$, or $\F{I}_n$, where it denotes the size of the matrix. For convenience, a vector is represented in print by a bold italicized symbol while a two-dimensional matrix is represented by a bold symbol, except for a row vector whose elements form a certain row of a matrix where we represent it in bold symbol as stated earlier. For example, $\bmone_n$ and $\bmzer_n$ denote the $n$-dimensional all ones- and zeros- column vectors, while $\F{1}_n$ and $\F{O}_n$ denote the all ones- and zeros- matrices of size $n$, respectively. Finally, the notation $[.;.]$ denotes the usual vertical concatenation.\\[0.5em]
\textbf{Special Mathematical Constants.} $\gamma_{em}$ is the Euler-Mascheroni constant defined by 
\[\gamma_{em} = \mathop {\lim }\limits_{n \to \infty } \left( { -\ln n + \sum\limits_{k = 1}^n {\frac{1}{k}} } \right) \approx 0.577216,\]
rounded to six decimal digits. $\epsilon_{\text{mach}}$ is the machine epsilon, which gives the upper bound on the relative approximation error due to rounding in floating point arithmetic.\\[0.5em]
\textbf{Common Fractional Differentiation Formulas.} Let $\alpha \in \MBRP$, $m = \left\lceil  \alpha  \right\rceil$, and $f \in \Def{\FOmega_T}$. The $\alpha$-th order Gr\"{u}nwald-Letnikov derivative of $f$ with respect to $t$ and a terminal value $a$ is given by
\begin{equation}
\GLD{a}{t}{\alpha}{f(t)} = \mathop {\lim }\limits_{\scriptstyle h \to 0\atop
\scriptstyle nh = t - a} {h^{ - \alpha }}\sum\limits_{k = 0}^n {{{( - 1)}^k}\left( {\begin{array}{*{20}{c}}
\alpha \\
k
\end{array}} \right)f(t - kh)}.
\end{equation}
The $\alpha$-th order left RL and Caputo FDs are denoted by $\RLD{0}{t}{\alpha}{f(t)}$ and $\CD{0}{t}{\alpha}{f(t)}$, respectively, and are defined for $t \in \FOmega_T$ by
\begin{align}
\RLD{0}{t}{\alpha}{f(t)} &= \left\{ \begin{array}{l}
\frac{1}{{\Gamma (m - \alpha )}}\frac{{{d^m}}}{{d{t^m}}}\C{I}_{t}^{(\tau)}\left[{{{(t - \tau )}^{m - \alpha  - 1}}f}\right],\quad \alpha \notin \MBZP,\\
f^{(m)}(t),\quad \alpha \in \MBZP,
\end{array} \right.\\
\CD{0}{t}{\alpha}{f(t)} &= \left\{ \begin{array}{l}
\frac{1}{{\Gamma (m - \alpha )}} \C{I}_t^{(\tau)} \left[{{{(t - \tau )}^{m - \alpha  - 1}}f^{(m)}}\right],\quad \alpha \notin \MBZP,\\
f^{(m)}(t),\quad \alpha \in \MBZP.
\end{array} \right.
\end{align}
None of these fractional-order derivatives however is a $T$-periodic function when $f$ is a $T$-periodic function as shown by the following theorem.
\begin{thm}[\cite{tavazoei2010note}]\label{thm:1}
Suppose that $f$ is a nonconstant $T$-periodic function. If $f^{(0:m)}$ exist, then $\GLD{0}{t}{\alpha}{f(t)}, \RLD{0}{t}{\alpha}{f(t)}$, and $\CD{0}{t}{\alpha}{f(t)}$ cannot be $T$-periodic functions $\foralla \alpha \notin \MBZP$. 
\end{thm}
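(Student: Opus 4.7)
My plan is to argue by contradiction, treating the Caputo and RL derivatives with essentially parallel asymptotic arguments; the Gr\"unwald-Letnikov case then follows from its coincidence with the RL derivative under the stated regularity on $f$.

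Focusing first on Caputo, I would assume that $g(t) := \CD{0}{t}{\alpha}{f(t)} \in \MBT_T$. Since $f^{(m)} \in \MBT_T$, the change of variable $\tau \mapsto \tau+T$ in the integral representation of $g(t+T)$ and subtraction of $g(t)$ collapses the overlapping portions and yields the identity
\begin{equation}
\int_{-T}^{0} (t-\tau)^{m-\alpha-1}\, f^{(m)}(\tau)\,d\tau = 0,\qquad \forall t \ge 0.
\end{equation}
For $t > T$ and $\tau \in [-T,0]$, I have $|\tau/t| < 1$, so the binomial series
\[
(t-\tau)^{m-\alpha-1} = t^{m-\alpha-1}\sum_{k=0}^{\infty}\binom{m-\alpha-1}{k}(-\tau/t)^{k}
\]
converges uniformly in $\tau$ and may be integrated term by term, producing an asymptotic expansion whose coefficients are proportional to the moments $\mu_{k} = \int_{-T}^{0} \tau^{k} f^{(m)}(\tau)\,d\tau$. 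Since $\alpha \notin \MBZP$, the exponents $m-\alpha-1-k$ for $k \in \MBZzerP$ are all distinct and non-integer, and none of the binomial prefactors $\binom{m-\alpha-1}{k}$ vanish (every factor $m-\alpha-1-j$ lies strictly between consecutive negative integers). Linear independence of distinct non-integer powers of $t$ on $(T,\infty)$ then forces $\mu_{k} = 0\,\foralle k \in \MBZzerP$. Because $f^{(m)}$ is continuous on $[-T,0]$ and polynomials are uniformly dense there by the Weierstrass theorem, vanishing of all polynomial moments forces $f^{(m)} \equiv 0$ on $[-T,0]$, hence on all of $\MBR$ by periodicity. Thus $f$ must be a polynomial, and the only $T$-periodic polynomial is a constant --- contradicting the hypothesis that $f$ is non-constant.

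For the RL case I would apply the same strategy after first commuting the outer $m$-th derivative through the integral. Writing $F(t) := \int_0^t (t-\tau)^{m-\alpha-1} f(\tau)\,d\tau$, the assumption $\RLD{0}{t}{\alpha}{f(t)} \in \MBT_T$ becomes $F^{(m)}(t+T) = F^{(m)}(t)$; invoking the same periodic cancellation trick and using the identity $\frac{d^m}{dt^m}(t-\tau)^{m-\alpha-1} = \frac{\Gamma(m-\alpha)}{\Gamma(-\alpha)}(t-\tau)^{-\alpha-1}$ reduces matters to
\[
\int_{-T}^{0} (t-\tau)^{-\alpha-1}\, f(\tau)\,d\tau = 0,\qquad \forall t \ge 0,
\]
whose asymptotic analysis is identical and forces the polynomial moments of $f$ itself to vanish, giving $f \equiv 0$ on $[-T,0]$ and on $\MBR$ by periodicity --- again contradicting non-constancy. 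The GL case is then immediate from the classical identity $\GLD{0}{t}{\alpha}{f(t)} = \RLD{0}{t}{\alpha}{f(t)}$ valid under the assumption $f^{(0:m)}$ exist.

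The step I expect to be the main obstacle is rigorously extracting vanishing of the individual moments $\mu_k$ from vanishing of the entire asymptotic sum. I would handle this by inductively isolating the leading-order term --- multiplying the identity by $t^{\alpha+1-m+k}$ and letting $t \to \infty$ to extract $\mu_k$, then subtracting off that contribution and repeating --- but one must bound the binomial tail uniformly in $\tau \in [-T,0]$ so that the term-by-term integration and limit exchange remain rigorous at every stage of the induction.
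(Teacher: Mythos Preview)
The paper does not supply its own proof of this theorem: it is stated with a citation to \cite{tavazoei2010note} and used only as motivation for introducing the sliding--memory operator. So there is no in-paper argument to compare against.

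That said, your proposal is a sound, self-contained route. The periodic-cancellation identity $\int_{-T}^{0}(t-\tau)^{m-\alpha-1}f^{(m)}(\tau)\,d\tau=0$ is derived correctly, and the moment-extraction step (multiply by the right power of $t$ and send $t\to\infty$, inductively) is the clean way to make the ``linear independence of distinct powers'' precise; the binomial coefficients indeed never vanish because $m-\alpha-1$ is non-integer. For the RL case your reduction is also right: since the difference integral $H(t)=\int_{-T}^{0}(t-\tau)^{m-\alpha-1}f(\tau)\,d\tau$ has no singularity for $t>0$, the $m$-fold differentiation under the integral sign is justified, and the constant $\Gamma(m-\alpha)/\Gamma(-\alpha)$ is nonzero for $\alpha\notin\MBZP$.

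One small point to tighten: the hypothesis is only that $f^{(0:m)}$ \emph{exist}, not that $f^{(m)}$ is continuous. Your Weierstrass/moment step implicitly uses continuity (or at least integrability together with a density argument in $L^1$) to pass from vanishing polynomial moments to $f^{(m)}\equiv 0$. Either add continuity as a standing assumption, or argue via $L^1$: vanishing moments force $f^{(m)}=0$ a.e., and since $f^{(m-1)}$ is everywhere differentiable (hence absolutely continuous on compact intervals), this suffices to make $f^{(m-1)}$ constant and hence $f$ a polynomial.
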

The aftermath of this theorem is that autonomous fractional-order systems whose differential equations contain only a fractional-order derivative defined based on the Gr\"{u}nwald-Letnikov, RL, or Caputo definitions cannot have nonconstant periodic solutions \cite{tavazoei2009proof,tavazoei2010note,kang2015nonexistence,kaslik2012non}. One approach to preserve the periodicity of RL and Caputo fractional-order operators and allow for the existence of periodic solutions of fractional-order models is to fix their memory length and vary their lower terminals, as shown in \cite{bourafa2021periodic}. In particular, the modified fractional operators, referred to by the RL and Caputo FDs with sliding fixed memory length $L > 0$, and denoted by $\MRLD{L}{t}{\alpha}{f(t)}$ and $\MCD{L}{t}{\alpha}{f(t)}$, respectively, are defined by
\begin{align}
\MRLD{L}{t}{\alpha}{f(t)} = \left\{ \begin{array}{l}
\frac{1}{{\Gamma (m - \alpha )}}\frac{{{d^m}}}{{d{t^m}}}\C{I}_{t-L, t}^{(\tau)}\left[{{{(t - \tau )}^{m - \alpha  - 1}}f}\right],\quad \alpha \notin \MBZP,\\
f^{(m)}(t),\quad \alpha \in \MBZP,
\end{array} \right.\\
\MCD{L}{t}{\alpha}{f(t)} = \left\{ \begin{array}{l}
\frac{1}{{\Gamma (m - \alpha )}} \C{I}_{t-L, t}^{(\tau)}\left[{{{(t - \tau )}^{m - \alpha  - 1}}f^{(m)}}\right],\quad \alpha \notin \MBZP,\label{eq:PMFCD1}\\
f^{(m)}(t),\quad \alpha \in \MBZP.
\end{array} \right.
\end{align}
If $f \in C^{(m)}(\MBRzerP)$, then $\MRLD{L}{t}{\alpha}{f(t)} = \MCD{L}{t}{\alpha}{f(t)}$, so we can denote both modified fractional operators by $\MD{L}{t}{\alpha}{}$. 

\begin{rem}
The reader may consult \cite[Theorem 3.9]{bourafa2021periodic} for a proof that the modified derivative $\MD{L}{t}{\alpha}{f}$ indeed preserves the periodicity of $f \foralla$ periodic function $f \in C^{m}[a-L,b]: \alpha \in \MBRP\backslash\MBZP, L \in \MBRP, m = \left\lceil  \alpha  \right\rceil, \{a,b\} \subset \MBR: a < b$. The reader may also consult Sections 3.4 and 3.5 of the same reference for a comparison between classical fractional-order derivatives and the modified derivative, in addition to two examples, including one on a physical model, to support the consistency of the modified derivative motif.
\end{rem}

\section{Problem Statement}
\label{sec:PS1}
In this section, we consider the general form of PFOCPs governed by $\MD{L}{t}{\alpha}{}$-based FDEs of order $0 < \alpha < 1$. In particular, our aim is to approximate the optimal $T$-periodic solutions of the following PFOCP:
\begin{mini}
  {\bmu}{J(\bmu) = \frac{1}{T} \C{I}_{T}^{(t)} {g}\cancbra{\bmx(t), \bmu(t), t}}{}{}
  {\label{eq:OC1}}{}
  \addConstraint{\MD{L}{t}{\alpha}{\bmx(t)}}{= \bmf\left(\bmx(t),\bmu(t),t\right)}{\quad \forall t \in {\FOmega_T},}
  \addConstraint{\bmc(\bmx(t)}{, \bmu(t), t) \le \bmzer_p}{\quad \forall t \in {\FOmega_T},}
\end{mini}
where $p \in \MBZP, \bmx \in {}_T\MFX_{n_x}^1, \bmu \in {}_T\MFU_{n_u}, g: \MBR^{n_x} \times \MBR^{n_u} \times \MBRzerP \to \MBR, \bmf = [f_1, \ldots, f_n]^{\top}: \MBR^{n_x} \times \MBR^{n_u} \times \MBRzerP \to \MBR^{n_x}$, and $\bmc = [c_1, \ldots, c_p]^{\top}: \MBR^{n_x} \times \MBR^{n_u} \times \MBRzerP \to \MBR^p$ with $\{g, f_i, c_i\} \subset C^{k}(\MBRzerP)\,\foralls k \in \MBZzerP$. Here, $\bmx$ and $\bmu$ are the state and control vector functions, respectively. We assume that a solution of the problem exists.

\section{The FGPS Approximation of \texorpdfstring{$\MD{L}{t}{\alpha}{}$}{The FGPS Approximation of the RL and Caputo FDs With Sliding Fixed Memory Length}}
\label{sec:FGPMFD1}
The change of variables 
\begin{equation}\label{eq:Elg1}
\tau = t-L\,y^{\frac{1}{1-\alpha}},
\end{equation}
transforms $\MD{L}{t}{\alpha}{f(t)}$ defined by Eq. \eqref{eq:PMFCD1} into a reduced form, denoted by $\ED{L}{t}{\alpha}{f(t)}$, and defined by
\begin{equation}\label{eq:RedElgPMFCD1}
\ED{L}{t}{\alpha}{f(t)} = \frac{L^{m-\alpha}}{(1-\alpha) \Gamma(m-\alpha)} \C{I}_1^{(y)} {\left[y^{\frac{m-1}{1-\alpha}} f^{(m)}\cancbra{t-L\,y^{\frac{1}{1-\alpha}}}\right]},
\end{equation}
$\forall t \in \FOmega_T, \alpha \in \MBRP\backslash\MBZP$. For $0 < \alpha < 1$, Eq. \eqref{eq:RedElgPMFCD1} simplifies further into
\begin{equation}\label{eq:RedElgPMFCD101}
\ED{L}{t}{\alpha}{f(t)} = \frac{L^{1-\alpha}}{\Gamma(2-\alpha)} \C{I}_1^{(y)} {f'\cancbra{t-L\,y^{\frac{1}{1-\alpha}}}}\quad \forall t \in \FOmega_T.
\end{equation}
In the sequel, we restrict our study of FGPS approximations for $\alpha \in (0, 1)$. To approximate Eq. \eqref{eq:RedElgPMFCD101}, consider the $N/2$-degree, $T$-periodic Fourier interpolant that matches a function $f \in \MBT_T$ at the set of nodes $\MBS_N^T$, denoted by ${I_N}f$, and defined by\begin{equation}\label{eq:eqLF1}
{I_N}f(t) = \sum\limits_{j = 0}^{N - 1} {{f_j}{\C{F}_j}(t)},
\end{equation}
where ${\C{F}_j}(t)$ is the $N/2$-degree trigonometric Lagrange interpolating polynomial given by
\begin{gather}
\C{F}_j(t) = \frac{1}{N}\sumd\sum\limits_{\left| k \right| \le N/2} {e^{i {\omega _k}(t - {t_{N,j}})}}\label{eq:CFL1}\\
 = \left\{ \begin{array}{l}
1,\quad t = t_{N,j},\nonumber\\
\frac{1}{N}\sin(N \nu_j)\cot({\nu_j}),\quad t \ne t_{N,j},
\end{array} \right.\quad \forall j \in \MBJ_N,
\end{gather}
${\omega _{a}} = 2\pi a/T\,\forall a \in \MBR, \nu_j = \pi \left( {t - {t_{N,j}}} \right)/T$, and the primed sigma denotes a summation in which the last term is omitted \cite{elgindy2019high}. For $f \in \MFF, \Im\left(\C{F}_j\right) = 0\,\forall j \in \MBJ$, and Eq. \eqref{eq:CFL1} simplifies into
\begin{equation}\label{eq:CFL2}
\C{F}_j(t) = \frac{1}{N}\sumd\sum\limits_{\left| k \right| \le N/2} {\cos\left({\omega _k}(t - {t_{N,j}})\right)}.
\end{equation}
Substituting Eq. \eqref{eq:eqLF1} into Eq. \eqref{eq:RedElgPMFCD101} yields the following approximation:
\begin{equation}\label{eq:RedElgPMFCD101app1}
\ED{L}{t}{\alpha}{f(t)} \approx \ED{L}{t}{\alpha}{I_Nf(t)} = \frac{L^{1-\alpha}}{\Gamma(2-\alpha)} \sum\limits_{j = 0}^{N - 1} {{f_j} \C{I}_1^{(y)}{{\C{F}'_j}\cancbra{t-L\,y^{\frac{1}{1-\alpha}}}}},
\end{equation}
where
\begin{align}
{\C{F}'_j}(t) = \left\{ \begin{array}{l}
\frac{2 \pi i}{N T}\sumd\sum\limits_{\scriptstyle\left| k \right| \le N/2\atop
\scriptstyle k \ne 0} {k{e^{i{\omega _k}(t - {t_{N,j}})}}}\quad \forall f \in \MFC,\\
-\frac{2 \pi}{N T}\sumd\sum\limits_{\scriptstyle\left| k \right| \le N/2\atop
\scriptstyle k \ne 0} {k\sin \left( {{\omega _k}(t - {t_{N,j}})} \right)} \quad \forall f \in \MFF.
\end{array} \right.
\end{align}

The integral $\C{I}_1^{(y)}{{\C{F}'_j}\cancbra{t-L\,y^{\frac{1}{1-\alpha}}}}$ is independent of $f$ and can be computed offline rapidly using the highly accurate shifted Gegenbauer (SG) quadratures; cf. \cite{Elgindy20161,Elgindy20171,elgindy2018optimal,Elgindy2023a}. In particular, we can approximate $\C{I}_1^{(y)}{{\C{F}'_j}\cancbra{t-L\,y^{\frac{1}{1-\alpha}}}}$ at any mesh point $t_{N,l} \in \MBS_{N}^T$ by the following formula \cite[Eq. (4.36)]{Elgindy20161}:
\begin{gather}
\C{I}_1^{(y)}{{\C{F}'_j}\cancbra{t_{N,l}-L\,y^{\frac{1}{1-\alpha}}}} \approx {}_{L}^E\C{Q}^{\alpha}_{N_G,l,j}\nonumber\\
= \frac{1}{2} \left[\F{P}\,{\C{F}'_j}\left(t_{N,l}\,\bmone_{N_G+1}-L\,\left(\hbmz_{N_G+1}^{\lambda}\right)^{\frac{1}{1-\alpha}}\right)\right],\label{eq:Quaderr1}
\end{gather}
where $\hz_{N_G,0:N_G}^{\lambda} = \frac{1}{2} \left(z_{N_G,0:N_G}^{\lambda}+1\right)\,\foralls N_G \in \MBZP$ are the SG-Gauss (SGG) points on the interval $\FOmega_1$ and $\F{P}$ is the $(N_G+1)$ dimensional, GG points-based integration row vector constructed using \cite[Algorithm 6 or 7]{Elgindy20171}. For relatively small quadrature nodes, we recommend to use instead the optimal Gegenbauer quadratures \cite{Elgindy201382}, which can provide more accurate integral approximations. Substituting Eq. \eqref{eq:Quaderr1} into \eqref{eq:RedElgPMFCD101app1} yields the following main approximation formula for the FD of the $T$-periodic function $f$ at any mesh point:
\begin{equation}\label{eq:RedElgPMFCD101app2}
\ED{L}{t_{N,l}}{\alpha}{f(t)} \approx \frac{L^{1-\alpha}}{\Gamma(2-\alpha)} \sum\limits_{j = 0}^{N - 1} {{}_{L}^E\C{Q}^{\alpha}_{N_G,l,j} {f_j}}\quad \forall l \in \MBJ_{N},
\end{equation}
or in matrix notation,
\begin{equation}\label{eq:RedElgPMFCD101app3}
\ED{L}{\bmt_{N}}{\alpha}{f(t)} \approx \frac{L^{1-\alpha}}{\Gamma(2-\alpha)} \left({}_{L}^E\bsC{Q}_{N_G}^{\alpha}\,f_{0:N-1}\right),
\end{equation}  
where $\ED{L}{\bmt_{N}}{\alpha}{f(t)} = \left[\ED{L}{t_{N,0}}{\alpha}{f(t)}, \ldots, \ED{L}{t_{N,N-1}}{\alpha}{f(t)}\right]^{\top}$, and
\begin{align*}
&{}_{L}^E\bsC{Q}^{\alpha}_{N_G} = \left[{}_{L}^E\bsC{Q}^{\alpha}_{N_G,0}; \ldots; {}_{L}^E\bsC{Q}^{\alpha}_{N_G,N-1}\right]:\\
&{}_{L}^E\bsC{Q}^{\alpha}_{N_G,l} = \left[{}_{L}^E\C{Q}^{\alpha}_{N_G,l,0}, \ldots, {}_{L}^E\C{Q}^{\alpha}_{N_G,l,N-1}\right]\quad \forall l \in \MBJ_N.
\end{align*}
We refer to Quadrature \eqref{eq:Quaderr1} and ${}_{L}^E\bsC{Q}^{\alpha}_{N_G}$ by the $\alpha$th-order Fourier-Gegenbauer-based pseudospectral quadrature (FGPSQ) with index $L$ and its associated integration matrix (FGPSFIM), respectively. Clearly, the FGPSFIM ${}_{L}^E\bsC{Q}^{\alpha}_{N_G}$ is a square, dense matrix of size $N$. In addition, the following theorem shows that ${}_{L}^E\bsC{Q}^{\alpha}_{N_G}$ is a Toeplitz (diagonal-constant) matrix in which each descending diagonal from left to right is constant.  

\begin{thm}\label{thm:Toeplitz1}
The FGPSFIM ${}_{L}^E\bsC{Q}^{\alpha}_{N_G}$ is a Toeplitz matrix.
\end{thm}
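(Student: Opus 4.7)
The plan is to show that each entry ${}_{L}^E\C{Q}^{\alpha}_{N_G,l,j}$ depends on the index pair $(l,j)$ only through the difference $l-j$, which is precisely the defining property of a Toeplitz matrix (constant descending diagonals, i.e.\ $A_{l+1,j+1}=A_{l,j}$). The key structural fact I would exploit is that the trigonometric Lagrange basis derivative $\C{F}'_j(t)$ is a \emph{shift} of a single underlying function: inspecting the explicit formula
\[
\C{F}'_j(t) = -\frac{2 \pi}{N T}\sumd\sum_{\substack{|k|\le N/2 \\ k\ne 0}} k\sin \left( {{\omega _k}(t - {t_{N,j}})} \right),
\]
we see that $\C{F}'_j(t) = \Phi_N(t - t_{N,j})$ for the single function $\Phi_N(s) = -\frac{2\pi}{NT}\sumd\sum_{|k|\le N/2, k\ne 0} k\sin(\omega_k s)$, which is independent of $j$ (and analogously in the complex case).

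First I would substitute this shift representation into the defining formula \eqref{eq:Quaderr1}. At the collocation node $t_{N,l}$, for each GG node $\hz_{N_G,r}^{\lambda}$ the argument of $\C{F}'_j$ becomes
\[
t_{N,l} - L\left(\hz_{N_G,r}^{\lambda}\right)^{\frac{1}{1-\alpha}} - t_{N,j}
= \tfrac{T}{N}(l-j) - L\left(\hz_{N_G,r}^{\lambda}\right)^{\frac{1}{1-\alpha}},
\]
since $t_{N,l}-t_{N,j} = T(l-j)/N$. Consequently
\[
\C{F}'_j\!\left(t_{N,l} - L\left(\hz_{N_G,r}^{\lambda}\right)^{\frac{1}{1-\alpha}}\right)
= \Phi_N\!\left(\tfrac{T}{N}(l-j) - L\left(\hz_{N_G,r}^{\lambda}\right)^{\frac{1}{1-\alpha}}\right),
\]
which depends on $(l,j)$ only through $l-j$.

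Next I would note that the integration row vector $\F{P}$ and the SGG node vector $\hbmz_{N_G+1}^{\lambda}$ in \eqref{eq:Quaderr1} are fixed objects that depend on $N_G$, $\lambda$, and $\alpha$ but not on $l$ or $j$. Therefore the whole quadrature sum defining ${}_{L}^E\C{Q}^{\alpha}_{N_G,l,j}$ is a function of the single integer $l-j$, say ${}_{L}^E\C{Q}^{\alpha}_{N_G,l,j} = \tau_{l-j}$. This yields ${}_{L}^E\C{Q}^{\alpha}_{N_G,l+1,j+1} = \tau_{l-j} = {}_{L}^E\C{Q}^{\alpha}_{N_G,l,j}$, establishing the Toeplitz structure.

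I do not anticipate a genuinely hard step here: the argument is essentially an observation that $\C{F}'_j$ depends on $t$ and $t_{N,j}$ only through their difference, and this property survives the quadrature because the quadrature weights and nodes are independent of $l,j$. The only minor care needed is to present the shift-invariance cleanly in both the complex and real cases (both formulas for $\C{F}'_j$ manifestly have the same shift form), and to be clear that the Toeplitz conclusion concerns the integer difference $l-j$ and not $l-j \bmod N$ (i.e., we are not claiming the stronger circulant property, though periodicity of $\Phi_N$ in $s$ with period $T$ would in fact give that too).
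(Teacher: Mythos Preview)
Your proposal is correct and follows essentially the same approach as the paper: both arguments hinge on the observation that $\C{F}'_j(t)$ depends on $t$ and $t_{N,j}$ only through their difference, so that $t_{N,l+1}-t_{N,j+1}=t_{N,l}-t_{N,j}$ forces ${}_{L}^E\C{Q}^{\alpha}_{N_G,l+1,j+1}={}_{L}^E\C{Q}^{\alpha}_{N_G,l,j}$. Your packaging via the single shift function $\Phi_N$ is a clean presentational variant of the paper's direct computation.
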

\begin{proof}
By Formula \eqref{eq:Quaderr1},
\begin{gather*}
{}_{L}^E\C{Q}^{\alpha}_{N_G,l+1,j+1} = \frac{1}{2} \left[\F{P}\,{\C{F}'_{j+1}}\left(t_{N,l+1}\,\bmone_{N_G+1}-L\,\left(\hbmz_{N_G+1}^{\lambda}\right)^{\frac{1}{1-\alpha}}\right)\right],\\
= \frac{\pi i}{N T} \F{P} \sumd\sum\limits_{\scriptstyle\left| k \right| \le N/2\atop
\scriptstyle k \ne 0} {k\,{\exp\left[i{\omega _k}\left(t_{N,l+1}\,\bmone_{N_G+1}-L\,\left(\hbmz_{N_G+1}^{\lambda}\right)^{\frac{1}{1-\alpha}} - {t_{N,j+1}}\,\bmone_{N_G+1}\right)\right]}}\\
= \frac{\pi i}{N T} \F{P} \sumd\sum\limits_{\scriptstyle\left| k \right| \le N/2\atop
\scriptstyle k \ne 0} {k\,{\exp\left[i{\omega _k}\left(t_{N,l}\,\bmone_{N_G+1}-L\,\left(\hbmz_{N_G+1}^{\lambda}\right)^{\frac{1}{1-\alpha}} - {t_{N,j}}\,\bmone_{N_G+1}\right)\right]}}\\
= \frac{1}{2} \left[\F{P}\,{\C{F}'_{j}}\left(t_{N,l}\,\bmone_{N_G+1}-L\,\left(\hbmz_{N_G+1}^{\lambda}\right)^{\frac{1}{1-\alpha}}\right)\right] = {}_{L}^E\C{Q}^{\alpha}_{N_G,l,j}\quad \forall  \{l,j\} \subset \MBJ_{N-1}.
\end{gather*}
It then follows that the FGPSFIM ${}_{L}^E\bsC{Q}^{\alpha}_{N_G}$ is a Toeplitz matrix and the proof is complete.
\end{proof}

Theorem \ref{thm:Toeplitz1} shows that the FGPSFIM ${}_{L}^E\bsC{Q}^{\alpha}_{N_G}$ requires only $2 N - 1$ storage. This is fortunate, because special algorithms such as the Levinson–Durbin algorithm, which exploits the Toeplitz structure, are much faster for solving Toeplitz systems and require only $O(N^2)$ flops and $O(N)$ storage \cite{bjorck2008numerical}. For information about the inversion and properties of Toeplitz matrices, the interested reader may consult \cite{LV20071189,Dan2016}.

The excellent approximations of the derived FGPS formulas \eqref{eq:RedElgPMFCD101app2} and \eqref{eq:RedElgPMFCD101app3} can be observed in Figures \ref{fig:0}--\ref{fig:2}, which show both the exact and FGPS approximate values of $\ED{L}{t}{\alpha}{\sin(t)}$ for the range $\alpha =$ 0.1:0.2:0.9, 0.99 using the parameter values $N \in \{4, 20, 40\}, L = 30,  N_G = 1000$, and $\lambda = 0$. Notice that the exact value of $\MD{L}{t}{\alpha}{\sin(t)}$ is given by
\begin{equation}
\MD{L}{t}{\alpha}{\sin(t)} = a \sin(t-L) + b \cos(t-L),
\end{equation}
where 
\begin{align}
a = L^{-\alpha} \left[E_{2,1-\alpha}\left(-L^2\right)-\sum_{k=0}^{\left\lfloor {\frac{m-1}{2}} \right\rfloor} {\frac{(-1)^k L^{2k}}{\Gamma(2k+1-\alpha)}}\right],\\
b = L^{1-\alpha} \left[E_{2,2-\alpha}\left(-L^2\right)-\sum_{k=0}^{\left\lfloor {\frac{m-2}{2}} \right\rfloor} {\frac{(-1)^k L^{2k}}{\Gamma(2k+2-\alpha)}}\right],
\end{align}
and $E_{a,b}(t)$ is the generalized Mittag-Leffler function defined by
\begin{equation}
E_{a,b}(t) = \sum_{k \in \MBZzerP} {\frac{t^k}{\Gamma(ak+b)}}\quad \forall a, b \in \MBRP;
\end{equation}
cf. \cite{bourafa2021periodic}. Notice also that $\ED{L}{t}{\alpha}{\sin(t)}$ is a $2\pi$-periodic function as clearly observed from Figure \ref{fig:2}, and the graph of $\ED{L}{t}{\alpha}{\sin(t)}$ converges to the graph of the cosine function as $\alpha \to 1$. 

\begin{figure}[t]
\centering
\includegraphics[scale=0.5]{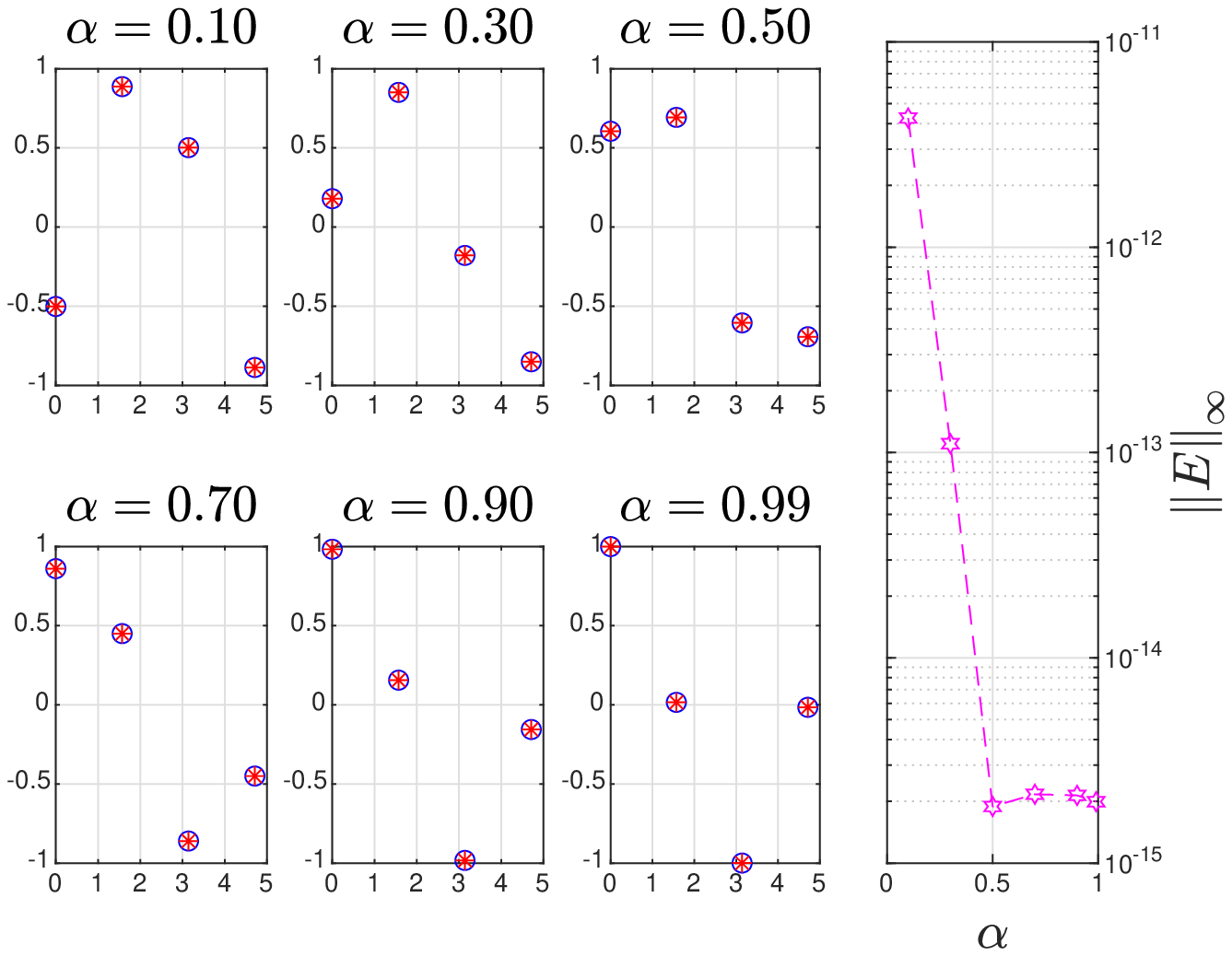}
\caption{The exact and FGPS approximate values of $\ED{L}{t}{\alpha}{\sin(t)}$ (Columns 1--3) and their corresponding maximum absolute errors (the 4th Column) for $\alpha = 0.1:0.2:0.9, 0.99$. The FGPS approximations were obtained using $N = 4, L = 30,  N_G = 1000$, and $\lambda = 0$. The exact values are shown in red color with * marker symbol, while the FGPS approximations are shown in blue colors with o marker symbol.}
\label{fig:0}
\end{figure}

\begin{figure}[t]
\centering
\includegraphics[scale=0.5]{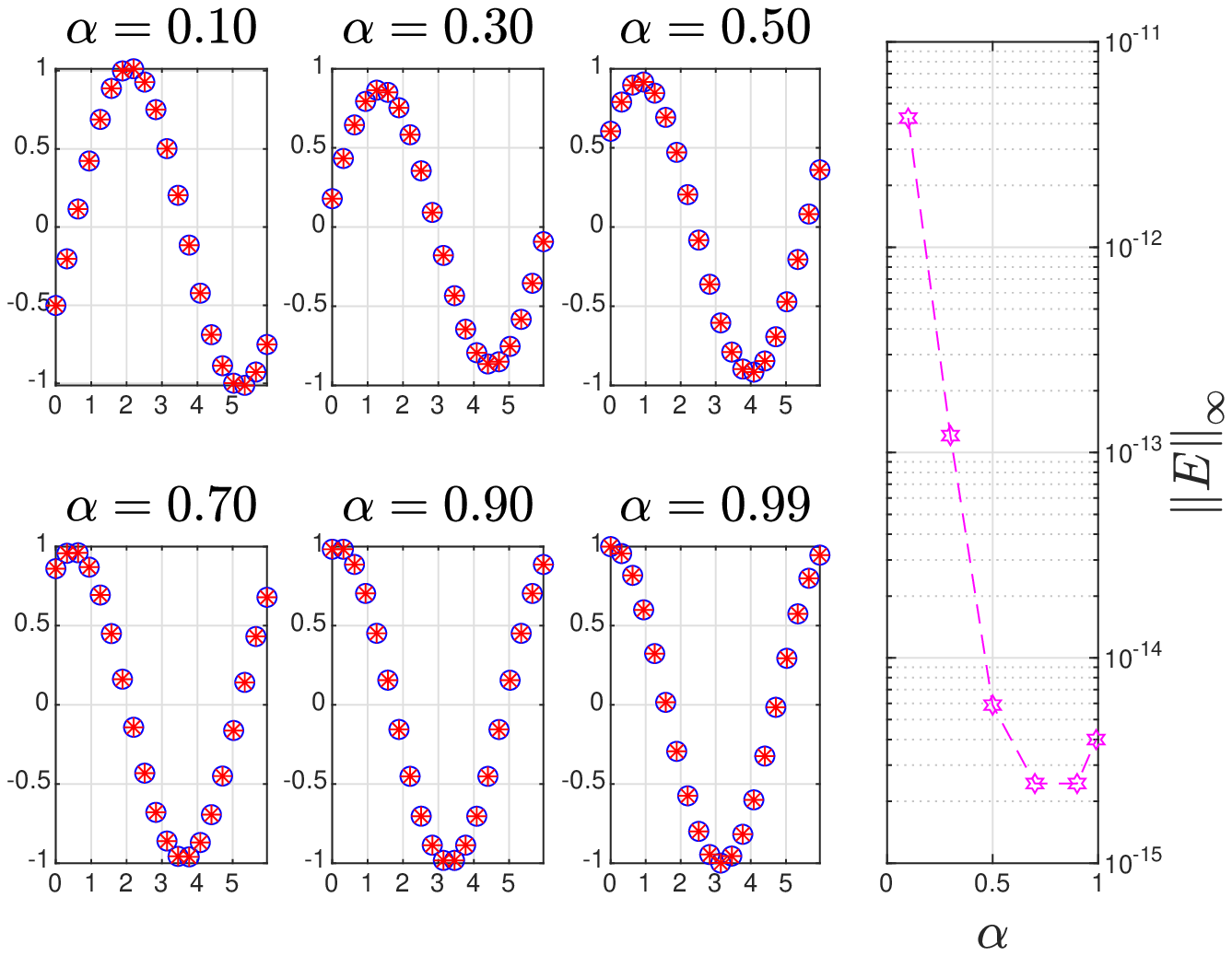}
\caption{The exact and FGPS approximate values of $\ED{L}{t}{\alpha}{\sin(t)}$ (Columns 1--3) and their corresponding maximum absolute errors (the 4th Column) for $\alpha = 0.1:0.2:0.9, 0.99$. The FGPS approximations were obtained using $N = 20, L = 30,  N_G = 1000$, and $\lambda = 0$. The exact values are shown in red color with * marker symbol, while the FGPS approximations are shown in blue colors with o marker symbol.}
\label{fig:1}
\end{figure}

\begin{figure}[t]
\centering
\includegraphics[scale=0.5]{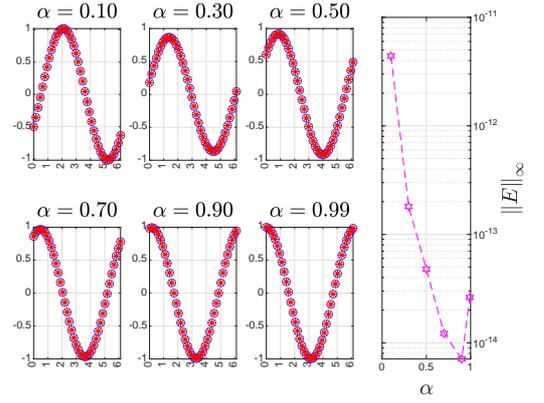}
\caption{The exact and FGPS approximate values of $\ED{L}{t}{\alpha}{\sin(t)}$ (Columns 1--3) and their corresponding maximum absolute errors (the 4th Column in log-lin scale) for $\alpha = 0.1:0.2:0.9, 0.99$. The FGPS approximations were obtained using $N = 40, L = 30,  N_G = 1000$, and $\lambda = 0$. The exact values are shown in red color with * marker symbol, while the FGPS approximations are shown in blue colors with o marker symbol.}
\label{fig:2}
\end{figure}

\section{Error and Convergence Analysis}
\label{sec:ECA1}
The following theorem states the truncation error form of the FGPSQ \eqref{eq:Quaderr1} and can be readily derived from \cite[Theorem 4.1]{ Elgindy20161}.
\begin{thm}\label{sec:erranalysgp1}
Suppose that ${\C{F}'_j}\left(t-L\,y^{\frac{1}{1-\alpha}}\right)\,\forall j \in \MBJ_N$ is approximated by the SGG interpolant obtained through interpolation at the SGG points set $\left\{\hz_{N_G,0:N_G}^{\lambda}\right\}\,\foralls N_G \in \MBZP, t \in \FOmega_T, L \in \MBRP$; cf. \cite{Elgindy20161,Elgindy2023a}. Then $\exists\,\{\zeta_{0:N-1}\} \subset (0,1)$ such that the error in the FGPSQ \eqref{eq:Quaderr1}, denoted by $E_{N,N_G,j}^{\lambda,\alpha}$, is given by
\begin{equation}
E_{N,N_G,j}^{\lambda,\alpha}(\zeta_j; t) = \frac{\psi_{L,N_G,j}^{\alpha}(\zeta_j; t)}{{({N_G} + 1)!{\mkern 1mu} K_{{N_G} + 1}^{\left( {\lambda} \right)}}} \C{I}_1^{(y)} {\hat G_{{N_G} + 1}^{\left( {\lambda} \right)}},
\end{equation}
where\\
\scalebox{0.8}{\parbox{\linewidth}{%
\begin{equation}\label{eq:Genius1}
\psi_{L,N_G,j}^{\alpha}(y;t) = \sum\limits_{k = 0}^{{N_G} + 1} {\left( {\begin{array}{*{20}{c}}
{{N_G} + 1}\\
k
\end{array}} \right) \left(\frac{L}{\alpha-1}\right)^k {{\left( {\frac{\alpha }{{\alpha  - 1}}} \right)}^{({N_G} - k + 1)}}{y^{\frac{k \alpha}{1-\alpha} + k - {N_G} - 1}}\C{F}_j^{(k + 1)}\left( {t - L{y^{\frac{1}{{1 - \alpha }}}}} \right)},
\end{equation}
}}
\\and
\begin{equation}
K_{l}^{(\lambda )} = 2^{2l - 1} \frac{{\Gamma \left( {2\lambda  + 1} \right)\Gamma \left( {l + \lambda } \right)}}{{\Gamma \left( {\lambda  + 1} \right)\Gamma \left( {l + 2\lambda } \right)}}\quad \forall l \in \mathbb{Z}_0^+,
\end{equation}
is the leading coefficient of the $l$th-degree SGG polynomial with index $\lambda, {\hat G_l^{\left( {\lambda} \right)}}$.
\end{thm}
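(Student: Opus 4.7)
The plan is to apply Theorem~4.1 of \cite{Elgindy20161} directly to the integrand $g_j(y) := \C{F}'_j\!\left(t - L\,y^{1/(1-\alpha)}\right)$ on $y \in [0,1]$, for each fixed $j \in \MBJ_N$, $t \in \FOmega_T$, $L \in \MBRP$, and $\alpha \in (0,1)$. The cited theorem supplies the truncation error of the SGG-interpolation-based quadrature on $[0,1]$; what remains is to identify the explicit form of that error with $\psi_{L,N_G,j}^{\alpha}$.

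First I would verify the regularity hypothesis: since $\C{F}_j$ is a trigonometric polynomial of degree $N/2$, it is $C^\infty(\MBR)$, and since $u(y) := t - L\,y^{1/(1-\alpha)}$ is $C^\infty$ on $(0,1]$ for $\alpha \in (0,1)$, the composition $g_j = \C{F}'_j \circ u$ has enough regularity on $[0,1]$ to invoke the SGG-interpolation error result. Applying Theorem~4.1 of \cite{Elgindy20161} to $g_j$ then yields an error representation of the form
\begin{equation*}
E_{N,N_G,j}^{\lambda,\alpha}(\zeta_j;t) = \frac{\Phi(\zeta_j;t)}{(N_G+1)!\,K_{N_G+1}^{(\lambda)}}\,\C{I}_1^{(y)} \hat G_{N_G+1}^{(\lambda)},
\end{equation*}
for some intermediate point $\zeta_j \in (0,1)$, where $\Phi(y;t)$ is the (appropriately organized) $(N_G+1)$-st-order derivative of $g_j$ with respect to $y$ that arises naturally in the interpolation-error term of Theorem~4.1.

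The technical core of the proof is therefore to show $\Phi \equiv \psi_{L,N_G,j}^{\alpha}$. Applying a chain-rule expansion (i.e., Fa\`a di Bruno's formula) to $g_j(y) = \C{F}'_j(u(y))$, each term in the resulting sum carries a factor $\C{F}_j^{(k+1)}(u(y))$ multiplied by a polynomial in the derivatives $u^{(m)}(y)$. Because $u$ is a shift by a pure monomial, every derivative reduces to the clean form $u^{(m)}(y) = -L\,(1/(1-\alpha))^{(m)}\,y^{1/(1-\alpha)-m}$, where $(1/(1-\alpha))^{(m)}$ is the falling factorial of $1/(1-\alpha)$. Substituting these into the Bell polynomials of the Fa\`a di Bruno expansion and simplifying using $\alpha/(\alpha-1) = 1 - 1/(1-\alpha)$ and $L/(\alpha-1) = -L/(1-\alpha)$, the sum collapses into the stated closed form, with $\binom{N_G+1}{k}$ and $(\alpha/(\alpha-1))^{(N_G-k+1)}$ encoding the combinatorial multiplicity and the monomial-power structure attached to the $\C{F}_j^{(k+1)}(u(y))$ term, while the exponent $y^{k\alpha/(1-\alpha)+k-N_G-1}$ records the residual power of $y$ left after $N_G+1$ differentiations.

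The main obstacle will be this combinatorial collapse of the Bell-polynomial sum to the binomial/falling-factorial closed form. If the direct manipulation proves cumbersome, a cleaner alternative is induction on $N_G$: the base case $N_G = 0$ reduces to a direct chain-rule computation, and the inductive step follows by differentiating the stated formula once more in $y$ and invoking Pascal's identity $\binom{n+1}{k} = \binom{n}{k} + \binom{n}{k-1}$ together with the falling-factorial recurrence $a^{(n+1)} = (a-n)\,a^{(n)}$ to match coefficients term by term. Once the identification $\Phi \equiv \psi_{L,N_G,j}^{\alpha}$ is established, substitution into the error representation from Theorem~4.1 of \cite{Elgindy20161} yields the claimed formula and completes the proof.
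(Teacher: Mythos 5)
Your overall skeleton is the same as the paper's: both arguments invoke Theorem~4.1 of \cite{Elgindy20161} to obtain the error representation with the $(N_G+1)$st $y$-derivative of $\C{F}'_j\left(t-L\,y^{\frac{1}{1-\alpha}}\right)$ evaluated at some $\zeta_j\in(0,1)$ in the numerator, and then reduce the theorem to the identity $\frac{d^{N_G+1}}{dy^{N_G+1}}\,\C{F}'_j\left(t-L\,y^{\frac{1}{1-\alpha}}\right)=\psi_{L,N_G,j}^{\alpha}(y;t)$. The computational device differs: you propose Fa\`a di Bruno's formula (or induction on $N_G$), whereas the paper first rewrites $\C{F}'_j\left(t-L\,y^{\frac{1}{1-\alpha}}\right)=\frac{\alpha-1}{L}\,y^{\frac{\alpha}{\alpha-1}}\frac{d}{dy}\C{F}_j\left(t-L\,y^{\frac{1}{1-\alpha}}\right)$ and applies the general Leibniz rule to this product together with $\frac{d^n}{dy^n}y^{\frac{\alpha}{\alpha-1}}=\left(\frac{\alpha}{\alpha-1}\right)^{(n)}y^{\frac{\alpha}{\alpha-1}-n}$.

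The genuine gap is that the ``combinatorial collapse'' of the Bell-polynomial sum to the stated $\psi_{L,N_G,j}^{\alpha}$ is the entire technical content of the proof, and you only assert it; in fact it cannot be carried out as described. Fa\`a di Bruno applied to $\C{F}'_j\circ u$ with $u(y)=t-L\,y^{\frac{1}{1-\alpha}}$ produces a sum over $k=1,\ldots,N_G+1$ whose coefficients $B_{N_G+1,k}\left(u',\ldots\right)$ are generalized Stirling-type numbers, not the products $\binom{N_G+1}{k}\left(\frac{L}{\alpha-1}\right)^k\left(\frac{\alpha}{\alpha-1}\right)^{(N_G-k+1)}$; moreover it yields no term proportional to $\C{F}_j^{(1)}$, whereas $\psi$ contains one (its $k=0$ term) with the nonvanishing coefficient $\left(\frac{\alpha}{\alpha-1}\right)^{(N_G+1)}y^{-N_G-1}$. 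Concretely, for $N_G=0$ the exact derivative is $\frac{L}{\alpha-1}\,y^{\frac{\alpha}{1-\alpha}}\,\C{F}''_j\left(t-L\,y^{\frac{1}{1-\alpha}}\right)$, while $\psi_{L,0,j}^{\alpha}$ carries the extra term $\frac{\alpha}{\alpha-1}\,y^{-1}\,\C{F}'_j\left(t-L\,y^{\frac{1}{1-\alpha}}\right)$, so your proposed induction already fails at the base case. (The paper's own Leibniz computation arrives at the stated formula only by replacing $\frac{d^{k+1}}{dy^{k+1}}\C{F}_j\left(t-L\,y^{\frac{1}{1-\alpha}}\right)$ with its leading Fa\`a di Bruno term $\left(\frac{L}{\alpha-1}\right)^{k+1}y^{\frac{(k+1)\alpha}{1-\alpha}}\C{F}_j^{(k+1)}\left(t-L\,y^{\frac{1}{1-\alpha}}\right)$ and discarding the remaining contributions; your more careful route would expose this discrepancy rather than reproduce the formula.) To complete a proof along your lines you would either have to establish the exact Bell-polynomial closed form and restate $\psi$ accordingly, or justify retaining only the leading term.
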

\begin{proof}
Theorem 4.1 in \cite{Elgindy20161} immediately gives
\begin{equation}
E_{N,N_G,j}^{\lambda,\alpha}(\zeta_j) = \frac{\frac{d^{N_G+1}}{dy^{N_G+1}} {\C{F}'_j}\left(t-L\,\zeta_j^{\frac{1}{1-\alpha}}\right)}{{({N_G} + 1)!{\mkern 1mu} K_{{N_G} + 1}^{\left( {\lambda} \right)}}} \C{I}_1^{(y)} {\hat G_{{N_G} + 1}^{\left( {\lambda} \right)}}.
\end{equation}
It remains now to prove that 
\begin{equation}\label{eq:proof1}
\psi_{L,N_G,j}^{\alpha}(y;t) = \frac{d^{N_G+1}}{dy^{N_G+1}} {\C{F}'_j}\left(t-L\,y^{\frac{1}{1-\alpha}}\right). 
\end{equation}
Since ${\C{F}'_j}\left(t-L\,y^{\frac{1}{1-\alpha}}\right) = \frac{\alpha-1}{L} y^{\frac{\alpha}{\alpha-1}} \frac{d}{dy} {\C{F}_j}\left(t-L\,y^{\frac{1}{1-\alpha}}\right)$ and\\ $\frac{d^n}{dy^n} y^{\frac{\alpha}{\alpha-1}} = \left(\frac{\alpha}{\alpha-1}\right)^{(n)} y^{\frac{\alpha}{\alpha-1}-n}$, the general Leibniz rule gives
\begin{gather}
\frac{d^n}{dy^n} {\C{F}'_j}\left(t-L\,y^{\frac{1}{1-\alpha}}\right) =\\ \frac{{\alpha  - 1}}{L}\sum\limits_{k = 0}^n {\left( {\begin{array}{*{20}{c}}
n\\
k
\end{array}} \right)\frac{{{d^{n - k}}}}{{d{y^{n - k}}}}} {y^{\frac{\alpha }{{\alpha  - 1}}}}\frac{{{d^{k + 1}}}}{{d{y^{k + 1}}}}{\C{F}_j}\left( {t - L{y^{\frac{1}{{1 - \alpha }}}}} \right)\\
= \frac{{\alpha  - 1}}{L}\sum\limits_{k = 0}^n {\left( {\begin{array}{*{20}{c}}
n\\
k
\end{array}} \right)} {\left( {\frac{\alpha }{{\alpha  - 1}}} \right)^{(n - k)}}{y^{\frac{\alpha }{{\alpha  - 1}} + k - n}}\frac{{{d^{k + 1}}}}{{d{y^{k + 1}}}}{\C{F}_j}\left( {t - L{y^{\frac{1}{{1 - \alpha }}}}} \right)\\
= \sum\limits_{k = 0}^n {\left( {\begin{array}{*{20}{c}}
n\\
k
\end{array}} \right)} \left(\frac{L}{\alpha-1}\right)^k {\left( {\frac{\alpha }{{\alpha  - 1}}} \right)^{(n - k)}}{y^{\frac{k \alpha}{1-\alpha} + k - n}}\C{F}_j^{(k + 1)}\left( {t - L{y^{\frac{1}{{1 - \alpha }}}}} \right),
\end{gather}
from which Eq. \eqref{eq:proof1} follows. 
\end{proof}
The following theorem underlines the truncation error bound of the FGPSQ \eqref{eq:Quaderr1}.
\begin{thm}\label{thm:Jan212022}
Suppose that 
\begin{equation}\label{eq:tss1}
L > 1-\alpha,
\end{equation}
and the assumption of Theorem \ref{sec:erranalysgp1} holds true. Then there exist some constants ${D^{\lambda}} > 0$ and $B_1^{\lambda} > 1$, which depend only on $\lambda$, such that $\forall j \in \MBJ_N$,\\ 
\scalebox{0.9}{\parbox{\linewidth}{%
\begin{align}
	&{\left| E_{N,N_G,j}^{\lambda,\alpha}(\zeta_j; t)\right|} \le  B_{2,N,N_G}^{L,\alpha,\zeta_j}\,{2^{ - 2{N_G} - 1}}{{{e}}^{{N_G}}}{N_G}^{\lambda - {N_G} - \frac{3}{2}} \times \nonumber\\
	&{\left\{ \begin{array}{l}
	1,\quad {N_G} \ge 0 \wedge \lambda \ge 0,\\
	\displaystyle{\frac{{\Gamma \left( {\frac{{{N_G}}}{2} + 1} \right)\Gamma \left( {\lambda + \frac{1}{2}} \right)}}{{\sqrt \pi\,\Gamma \left( {\frac{{{N_G}}}{2} + \lambda + 1} \right)}}},\quad N_G \in \MBZOP \wedge  - \frac{1}{2} < \lambda < 0,\\
	\displaystyle{\frac{{2\Gamma \left( {\frac{{{N_G} + 3}}{2}} \right)\Gamma \left( {\lambda + \frac{1}{2}} \right)}}{{\sqrt \pi  \sqrt {\left( {{N_G} + 1} \right)\left( {{N_G} + 2\lambda + 1} \right)}\,\Gamma \left( {\frac{{{N_G} + 1}}{2} + \lambda} \right)}}},\quad N_G \in \MBZzereP \wedge  - \frac{1}{2} < \lambda < 0,\\
	B_1^{\lambda} {\left( {{N_G} + 1} \right)^{ - \lambda}},\quad {N_G} \to \infty  \wedge  - \frac{1}{2} < \lambda < 0,
	\end{array} \right.}\label{eq:wow1}
\end{align}
}}\vspace{-4mm}
where $B_{2,N,N_G}^{L,\alpha,\zeta_j} = {D^{\lambda}} {A_{N,N_G}^{L,\alpha,\zeta_j}}$,\\ 
\scalebox{0.9}{\parbox{\linewidth}{%
\begin{align}
&{A_{N,N_G}^{L,\alpha,\zeta_j}} = N^{N_G+1} \zeta_j^{-(N_G+1)} \left(\frac{L}{1-\alpha}\right)^{N_G+1} \gamma_{N_G}^{\alpha} \times\\
&\left\{ \begin{array}{l}
c_1 \left[ {\left( {\begin{array}{*{20}{c}}
{{N_G} + 1}\\
{{N_G}/2}
\end{array}} \right){\delta _{\frac{{{N_G}}}{2},\left\lfloor {\frac{{{N_G}}}{2}} \right\rfloor }} + \left( {\begin{array}{*{20}{c}}
{{N_G} + 1}\\
{\left\lfloor {{N_G}/2} \right\rfloor }
\end{array}} \right)\left( {1 - {\delta _{\frac{{{N_G}}}{2},\left\lfloor {\frac{{{N_G}}}{2}} \right\rfloor }}} \right)} \right],\quad N_G \in \MBZP,\\
c_2 \frac{(2 e)^{N_G/2}}{\sqrt{N_G}},\quad N_G \in\,\oset{\rightarrow}{\MBZeP},\\
c_3 \frac{(e N_G)^{\left\lfloor {N_G/2} \right\rfloor}}{{\left( {\left\lfloor {N_G/2} \right\rfloor } \right)^{\left\lfloor {N_G/2} \right\rfloor  + 1/2}}},\quad N_G \in \,\oset{\rightarrow}{\MBZOP},
\end{array} \right.\label{eq:Remar1}
\end{align}
}}\\\vspace{-4mm}

\noindent $\foralls \{c_{1:3}\} \subset \MBRP$, and $\gamma_{N_G}^{\alpha} = \sum\nolimits_{k \in {\MBJ_{{N_G} + 2}}} {\left| {{{\left( {\frac{\alpha }{{1 - \alpha }}} \right)}^{({N_G} - k + 1)}}} \right|}$.
\end{thm}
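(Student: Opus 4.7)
The plan is to start from the exact truncation error expression provided by Theorem \ref{sec:erranalysgp1},
\[
E_{N,N_G,j}^{\lambda,\alpha}(\zeta_j;t) = \frac{\psi_{L,N_G,j}^{\alpha}(\zeta_j;t)}{(N_G+1)!\,K_{N_G+1}^{(\lambda)}}\,\C{I}_1^{(y)}\hat G_{N_G+1}^{(\lambda)},
\]
take absolute values, and bound each of the three factors separately: (i) the ``derivative'' factor $|\psi_{L,N_G,j}^{\alpha}(\zeta_j;t)|$, which carries all dependence on $f$, $L$, $\alpha$, $\zeta_j$ and the trigonometric interpolant; (ii) the normalisation factor $1/[(N_G+1)!\,K_{N_G+1}^{(\lambda)}]$, which is purely a Gegenbauer constant; and (iii) the weighted integral $|\C{I}_1^{(y)}\hat G_{N_G+1}^{(\lambda)}|$ of the SGG polynomial. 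Factors (ii) and (iii) are already handled by the existing quadrature error analysis of \cite{Elgindy20161}, so the novel work is concentrated in (i), which is where the structure characteristic of the transformation \eqref{eq:Elg1} enters.

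For factor (i), I would use formula \eqref{eq:Genius1} term by term. Each summand contains: a binomial coefficient $\binom{N_G+1}{k}$, the constant $(L/(\alpha-1))^k$, the falling factorial $(\alpha/(\alpha-1))^{(N_G-k+1)}$, a $\zeta_j$-power bounded by $\zeta_j^{-(N_G+1)}$ for $\zeta_j\in(0,1)$ (using $k\alpha/(1-\alpha)+k-N_G-1\ge -(N_G+1)$), and a derivative $\C{F}_j^{(k+1)}$ of the trigonometric Lagrange basis. Writing $\C{F}_j$ in its exponential form and differentiating termwise yields $|\C{F}_j^{(k+1)}(s)|\le (\pi N/T)^{k+1}$ for every real $s$, which contributes the $N^{N_G+1}$ factor. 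Pulling out $(L/(1-\alpha))^{N_G+1}\,\zeta_j^{-(N_G+1)}$ and the $\gamma_{N_G}^{\alpha}=\sum_{k\in\MBJ_{N_G+2}}|(\alpha/(1-\alpha))^{(N_G-k+1)}|$ factor, what remains is a sum of binomial coefficients. The three cases in \eqref{eq:Remar1} then come from bounding $\sum_k\binom{N_G+1}{k}$: for fixed $N_G$ the sum is dominated by its central term (the $\binom{N_G+1}{\lfloor N_G/2\rfloor}$ bound), and for $N_G\to\infty$ one applies Stirling's approximation, distinguishing even and odd $N_G$ because the central binomial coefficient has slightly different closed forms in the two parities. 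The hypothesis $L>1-\alpha$ of \eqref{eq:tss1} is precisely what ensures the constant $L/(1-\alpha)>1$ so that these asymptotic dominances go through uniformly.

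For factors (ii) and (iii), the product $|\C{I}_1^{(y)}\hat G_{N_G+1}^{(\lambda)}|/[(N_G+1)!\,K_{N_G+1}^{(\lambda)}]$ is the standard SGG quadrature remainder magnitude, and \cite[Theorem 4.1]{Elgindy20161} already supplies the four-branch asymptotic $2^{-2N_G-1}e^{N_G}N_G^{\lambda-N_G-3/2}$ multiplied by the case-dependent Gamma-function ratios displayed in \eqref{eq:wow1}, with the constants $D^{\lambda}$ and $B_1^{\lambda}$ coming from the ratio bounds on $\Gamma(N_G+\lambda+\cdot)/\Gamma(N_G+\cdot)$ used there; I would simply quote this and multiply it by the bound for $|\psi_{L,N_G,j}^{\alpha}(\zeta_j;t)|$ obtained in the previous paragraph, which produces exactly the claimed product form with $B_{2,N,N_G}^{L,\alpha,\zeta_j}=D^{\lambda}\,A_{N,N_G}^{L,\alpha,\zeta_j}$.

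The main obstacle I anticipate is the bookkeeping in step (i): the $\zeta_j$-exponent $k\alpha/(1-\alpha)+k-N_G-1$ depends on $k$, $\alpha$ and $N_G$ in a way that is not monotone in $k$, so one has to check carefully that the worst case is $k=0$ (giving $-(N_G+1)$) on $\zeta_j\in(0,1)$; and the passage from the finite binomial sum to the Stirling-type tail expressions for $\oset{\rightarrow}{\MBZeP}$ versus $\oset{\rightarrow}{\MBZOP}$ requires a clean separation into parities so that the constants $c_1,c_2,c_3$ can be extracted uniformly. Once this case split is done, the rest is an essentially mechanical multiplication of the bounds obtained in the three factors.
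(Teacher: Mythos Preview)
Your proposal is correct and follows essentially the same route as the paper: bound $|\psi_{L,N_G,j}^{\alpha}(\zeta_j;t)|$ term by term using the explicit derivative bound $|\C{F}_j^{(k+1)}|\le C\,N^{k+1}$ (the paper obtains this from an induction on Eq.~\eqref{eq:CFL2}), the worst-case $\zeta_j$-exponent $-(N_G+1)$, the uniform bound $(L/(1-\alpha))^k\le (L/(1-\alpha))^{N_G+1}$ coming from hypothesis \eqref{eq:tss1}, and then split the remaining sum as $\bigl(\max_k\binom{N_G+1}{k}\bigr)\,\gamma_{N_G}^{\alpha}$ with Stirling for the asymptotic cases. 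The only cosmetic difference is that the paper invokes \cite[Theorem 5.5]{Elgindy2023a} rather than \cite[Theorem 4.1]{Elgindy20161} for the Gegenbauer remainder factor that produces the four-branch expression in \eqref{eq:wow1}.
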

\begin{proof}
First notice that ${\C{F}_j} \in C^{\infty}(\MBR)\,\forall j \in \MBJ_N$. Through Eq. \eqref{eq:CFL2} and mathematical induction, we can easily show that the $n$th-derivative of ${\C{F}'_j}$ is given by\\
\scalebox{0.9}{\parbox{\linewidth}{%
\begin{equation}\label{eq:CFL2nth1}
\C{F}_j^{(n+1)}(t) = (-1)^{\left\lfloor {\frac{{n + 2}}{2}} \right\rfloor} \frac{1}{N} \left(\frac{2\pi}{T}\right)^{n+1} \sumd\sum\limits_{\left| k \right| \le N/2} {k^{n+1} \sin\left({\omega _k}(t - {t_{N,j}}) + {\delta _{\frac{{n + 1}}{2},\left\lfloor {\frac{{n + 1}}{2}} \right\rfloor }}\frac{\pi }{2}\right)}.
\end{equation}  
}}\\
Therefore, 
\begin{equation}\label{eq:LFOrder1}
\C{F}_j^{(n+1)}(t) = O\left(N^n\right),\quad\text{ as }N \to \infty\,\forall t \in \FOmega_T, n \in \MBZP.
\end{equation}
Notice also that 
\[\mathop {\max }\limits_{k \in {\MBJ_{n + 2}}} \left( {\begin{array}{*{20}{c}}
{n + 1}\\
k
\end{array}} \right) = \left\{ \begin{array}{l}
\left( {\begin{array}{*{20}{c}}
{n + 1}\\
{n/2}
\end{array}} \right),\quad n \in \MBZeP,\\
\left( {\begin{array}{*{20}{c}}
{n + 1}\\
{\left\lfloor {n/2} \right\rfloor }
\end{array}} \right),\quad n \in \MBZOP.
\end{array} \right.\]
Now, since $\left( {\begin{array}{*{20}{c}}
n\\
k
\end{array}} \right) \le n^k/k!\;\forall 1 \le k \le n$, and $(n+1)! \approx \sqrt{2 \pi} n^{3/2} (n/e)^n$, as $n \to \infty$ \cite[Lemma 4.2]{Elgindy20161}, then $\exists\,\beta_1 \in \MBRP:$
\begin{equation}
\left( {\begin{array}{*{20}{c}}
{n + 1}\\
{n/2}
\end{array}} \right) \le \frac{{{{(n + 1)}^{n/2}}}}{{{\frac{n}{2}!}}} \le {\beta _1}\frac{{{{(2e)}^{n/2}}}}{{\sqrt n }}\quad \forall n \in \,\oset{\rightarrow}{\MBZeP}.
\end{equation}
On the other hand, since 
\begin{equation}
\left\lfloor {n/2} \right\rfloor ! \approx \sqrt {2\pi }\,{\left( {\left\lfloor {n/2} \right\rfloor  - 1} \right)^{3/2}}{\left( {\left\lfloor {n/2} \right\rfloor  - 1} \right)^{\left\lfloor {n/2} \right\rfloor  - 1}}{e^{1 - \left\lfloor {n/2} \right\rfloor }}\quad \forall n \in \,\oset{\rightarrow}{\MBZOP},
\end{equation}
then $\exists\, \beta_2 \in \MBRP:$
\begin{equation}
\left( {\begin{array}{*{20}{c}}
{n + 1}\\
{\left\lfloor {n/2} \right\rfloor }
\end{array}} \right) \le {\beta _2}\frac{{{{(en)}^{\left\lfloor {n/2} \right\rfloor }}}}{{{{\left( {\left\lfloor {n/2} \right\rfloor } \right)}^{\left\lfloor {n/2} \right\rfloor  + 1/2}}}}\quad \forall n \in \,\oset{\rightarrow}{\MBZOP}.
\end{equation}
The above results imply\\
\scalebox{0.85}{\parbox{\linewidth}{%
\begin{gather}
\left|\psi_{L,N_G,j}^{\alpha}(y;t)\right| \le \beta_3 N^{N_G+1} \zeta_j^{-(N_G+1)} \left(\frac{L}{1-\alpha}\right)^{N_G+1} \sum\limits_{k = 0}^{{N_G} + 1} {\left( {\begin{array}{*{20}{c}}
{{N_G} + 1}\\
k
\end{array}} \right) {\left|{\left( {\frac{\alpha }{{1-\alpha}}} \right)}^{({N_G} - k + 1)}\right|}}\\
\le \beta_3 N^{N_G+1} \zeta_j^{-(N_G+1)} \left(\frac{L}{1-\alpha}\right)^{N_G+1} \mathop {\max }\limits_{k \in {\MBJ_{N_G + 2}}} \left( {\begin{array}{*{20}{c}}
{{N_G} + 1}\\
k
\end{array}} \right)\sum\nolimits_{k \in {\MBJ_{{N_G} + 2}}} {\left| {{{\left( {\frac{\alpha }{{1 - \alpha }}} \right)}^{({N_G} - k + 1)}}} \right|}
\\
\le {A_{N,N_G}^{L,\alpha,\zeta_j}}\quad \foralls \beta_3 \in \MBZP.\label{eq:Genius1}
\end{gather}
}}\\
The rest of the proof follows by \cite[Theorem 5.5]{Elgindy2023a}.
\end{proof}

Theorem \ref{thm:Jan212022} shows that the FGPSQ \eqref{eq:Quaderr1} converges exponentially fast for sufficiently smooth periodic functions, as $N_G \to \infty$, while holding all other parameters fixed. To elaborate further, one may explain this fact by realizing that when the parameter $N_G$ exist in a particular factor of the error bound \eqref{eq:wow1}, it either exists in the base/power of that factor with the power/base being constant, respectively, except for the two error factors ${N_G}^{\lambda - {N_G} - \frac{3}{2}}$ and $\frac{(e N_G)^{\left\lfloor {N_G/2} \right\rfloor}}{{\left( {\left\lfloor {N_G/2} \right\rfloor } \right)^{\left\lfloor {N_G/2} \right\rfloor  + 1/2}}}$. The former factor decays exponentially while the latter increases with $N_G$. To determine the growth rate of the latter factor, notice that we can decompose it into the product of the two factors $\frac{{{{(2e)}^{\left\lfloor {{N_G}/2} \right\rfloor }}}}{{{{\left( {\left\lfloor {{N_G}/2} \right\rfloor } \right)}^{1/2}}}}$ and ${\left( {\frac{{{N_G}/2}}{{\left\lfloor {{N_G}/2} \right\rfloor }}} \right)^{\left\lfloor {{N_G}/2} \right\rfloor }}$ with 
\[\quad \mathop {\lim }\limits_{{N_G} \to \infty } {\left( {\frac{{{N_G}/2}}{{\left\lfloor {{N_G}/2} \right\rfloor }}} \right)^{\left\lfloor {{N_G}/2} \right\rfloor }} \in [1/2,2e].\]
Since the error factor ${N_G}^{\lambda - {N_G} - \frac{3}{2}}$ decays much faster than the growth rate of $\frac{{{{(2e)}^{\left\lfloor {{N_G}/2} \right\rfloor }}}}{{{{\left( {\left\lfloor {{N_G}/2} \right\rfloor } \right)}^{1/2}}}}$, the error bound \eqref{eq:wow1} decays with exponential rate.

There are a number of other important remarks that can be drawn from the above theorem by analyzing the coefficients ${A_{N,N_G}^{L,\alpha,\zeta_j}}$; we summarize them in the following items:
\begin{enumerate}[label=(\roman*)]
\item While selecting a relatively small value of the memory length $L$ is not recommended, as pointed out in \cite{bourafa2021periodic}, increasing its value while holding all other parameters fixed increases the quadrature error bound, as indicated by Eqs. \eqref{eq:wow1} and \eqref{eq:Remar1}, thus reduces the accuracy of the FGPS approximation to the periodic FD operator $\ED{L}{t}{\alpha}{}$. This theoretical remark is supported numerically by comparing the FGPS errors obtained in Figures \ref{fig:0}, \ref{fig:1}, and \ref{fig:3} using $L = 30$ and $L = 100$, respectively. The memory length selection for $\ED{L}{t}{\alpha}{}$ therefore entails a tradeoff in the sense that higher $L$ values increase the capacity of $\ED{L}{t}{\alpha}{}$ to closely simulate the exact FD of periodic functions, especially when $\alpha \to 1$, but negatively reduce the accuracy of the FGPS approximation and vice versa.
\item The larger the number of Fourier interpolant modes $N$, the larger are the expected FGPS errors! This is a striking result because the Fourier interpolants of analytic periodic functions converge exponentially as the number of Fourier interpolation modes increases, as indicated by \cite[Corollary 5.1]{Elgindy2023b}.
This observation is supported numerically by comparing the FGPS errors obtained in Figures \ref{fig:0}--\ref{fig:2}, and \ref{fig:4} for $N  = 4, 20, 40$, and $100$, respectively. Fortunately, \cite[Corollary 5.1]{Elgindy2023b} asserts that Fourier interpolation for sufficiently smooth functions converges exponentially fast using relatively coarse mesh grids; therefore, one can run the FGPS method, which we present in Section \ref{sec:FGPM1}, to solve PFOCPs using a relatively small number of collocation nodes and can still resolve both the optimal solutions and the FD of the state variables with exceedingly high accuracy.
\end{enumerate}
Figures \ref{fig:0}--\ref{fig:4} show that the FGPS errors are non-uniform with respect to $\alpha$. To analyze this behavior, we refer to the following theorem which assumes a stronger condition than Condition \eqref{eq:tss1}.

\begin{thm}\label{thm:hihi2}
Suppose that 
\begin{equation}\label{eq:lmk1}
L > e^{1-\gamma_{em}},
\end{equation}
and the assumption of Theorem \ref{sec:erranalysgp1} holds true. Then the FGPS upper error bounds are larger on the first half $\alpha$-interval than their values on the second half, except when $\alpha \to 1$, and the FGPS error bounds attain their minimum values exactly at $\alpha = 1/2$ when $N_G \to \infty$.
\end{thm}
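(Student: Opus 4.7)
The plan is to reduce the theorem to analysing the $\alpha$-dependent factor of the bound in \eqref{eq:wow1}, which in view of \eqref{eq:Remar1} is
\[
\phi(\alpha)\;:=\;\left(\frac{L}{1-\alpha}\right)^{N_G+1}\gamma_{N_G}^{\alpha},
\]
since every other quantity in \eqref{eq:wow1} is $\alpha$-independent and the pre-factor $L^{1-\alpha}/\Gamma(2-\alpha)$ from \eqref{eq:RedElgPMFCD101app3} is smooth and slowly varying in $\alpha$. The decisive observation is that at $\alpha=1/2$ the ratio $\alpha/(1-\alpha)=1$, the falling factorial $(1)^{(n)}=\prod_{l=0}^{n-1}(1-l)$ vanishes for every $n\ge 2$, and only the $n\in\{0,1\}$ terms of the sum defining $\gamma_{N_G}^{\alpha}$ survive, giving $\gamma_{N_G}^{1/2}=2$ independently of $N_G$ and hence $\phi(1/2)=2(2L)^{N_G+1}$ purely geometric in $N_G$.

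For any $\alpha\in(0,1)$ with $x:=\alpha/(1-\alpha)\notin\MBZzerP$, I would invoke the identity $(x)^{(n)}=(-1)^n\Gamma(n-x)/\Gamma(-x)$ together with Stirling's formula to obtain $\gamma_{N_G}^{\alpha}\sim \Gamma(N_G+1-x)/|\Gamma(-x)|$ as $N_G\to\infty$. Taking logarithms and subtracting yields
\[
\log\frac{\phi(\alpha)}{\phi(1/2)}\;=\;N_G\log N_G\,-\,N_G\,-\,(N_G+1)\log\bigl(2(1-\alpha)\bigr)\,-\,\log|\Gamma(-x)|\,+\,O(\log N_G),
\]
whose leading $N_G\log N_G$ term blows up for every fixed $\alpha\ne 1/2$; this establishes the minimum-at-$\alpha=1/2$ assertion as $N_G\to\infty$. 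The hypothesis $L>e^{1-\gamma_{em}}$, i.e.\ $\log L>\digamma(2)$, enters at the next-to-leading order: the constant $1-\gamma_{em}=\digamma(2)$ is precisely what the digamma function produces in the Stirling expansion of $\log|\Gamma(-x)|$, so this threshold is exactly what makes the sub-dominant $O(N_G)$ correction non-negative uniformly in $\alpha$.

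For the first-half versus second-half comparison, the key ingredient is the reflection formula $|\Gamma(-x)|=\pi/(x\,|\sin\pi x|\,\Gamma(x))$, which governs the constant $1/|\Gamma(-x)|$ in the growth rate of $\gamma_{N_G}^\alpha$. On $\alpha\in(0,1/2)$ (so $x\in(0,1)$) this constant is trapped between the two simple poles of $\Gamma(-\cdot)$ at $x\in\{0,1\}$ and is controlled by a single universal bound, whereas on $\alpha\in(1/2,1)$ (so $x\in(1,\infty)$) the function $1/|\Gamma(-x)|$ develops an infinite cascade of resonance zeros at the integers $x=2,3,\dots$, corresponding to the rationals $\alpha_n=n/(n+1)$, at which $\gamma_{N_G}^\alpha$ truncates to a bounded finite sum. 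A careful case-split between the interior of each sub-interval $(\alpha_n,\alpha_{n+1})$ and small neighbourhoods of the resonance points, combined with the hypothesis \eqref{eq:lmk1} to patch the transitions, is the route by which the stated ordering is extracted; the ``except when $\alpha\to 1$'' clause follows automatically because $(L/(1-\alpha))^{N_G+1}$ blows up there.

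\textbf{Main obstacle.} The principal technical hurdle is the non-smooth nature of $\gamma_{N_G}^\alpha$ caused by this cascade of resonances: at each $\alpha_n$ the function has a cusp-type local minimum where the Stirling-based asymptotic degenerates, so a uniform-in-$\alpha$ first-half/second-half comparison must be stitched together from interior Stirling estimates and local finite-sum expansions. The strict threshold $L>e^{1-\gamma_{em}}$ is designed precisely to make the two regimes match at the next-to-leading order, with the constant $\gamma_{em}=-\digamma(1)$ entering via the standard expansion of $\log\Gamma$ near the resonances.
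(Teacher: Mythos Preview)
Your proposal is considerably more analytical than the paper's own argument, and your computation $\gamma_{N_G}^{1/2}=2$ together with the Stirling estimate $\gamma_{N_G}^{\alpha}\sim\Gamma(N_G+2-x)/|\Gamma(-x)|$ (for $x=\alpha/(1-\alpha)\notin\MBZzerP$) is correct and in fact goes well beyond what the paper proves. The paper does \emph{not} attempt a rigorous asymptotic analysis of $\gamma_{N_G}^{\alpha}$ at all: its treatment of that factor is purely graphical, based on inspection of Figure~\ref{fig:5}, and the author explicitly flags a mathematical analysis of $\gamma_{N_G}^{\alpha}$ as future work in Section~\ref{sec:FW1}. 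So on that front your route is the more substantive one.

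There is, however, a genuine gap in your proposal concerning the hypothesis $L>e^{1-\gamma_{em}}$. You claim it enters at next-to-leading order in the Stirling expansion of $\log|\Gamma(-x)|$ inside your ratio $\phi(\alpha)/\phi(1/2)$. But observe that $L$ cancels completely in that ratio: $\phi(\alpha)/\phi(1/2)=\bigl(2(1-\alpha)\bigr)^{-(N_G+1)}\gamma_{N_G}^{\alpha}/2$, so the threshold on $L$ cannot be recovered from your $\phi$-analysis at any order. In the paper the hypothesis is used for an entirely different and much simpler purpose, namely to control the prefactor $\mu(\alpha)=L^{1-\alpha}/\Gamma(2-\alpha)$ that you dismissed as ``slowly varying''. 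One computes
\[
\frac{d\mu}{d\alpha}=\mu(\alpha)\bigl[\digamma(2-\alpha)-\ln L\bigr],
\]
and since $\digamma$ is increasing, $\sup_{\alpha\in(0,1)}\digamma(2-\alpha)=\digamma(2)=1-\gamma_{em}$; the assumption $\ln L>1-\gamma_{em}$ is therefore exactly what forces $\mu$ to be strictly decreasing on $(0,1)$ (and, incidentally, what guarantees $L>1-\alpha$ so that Theorem~\ref{thm:Jan212022} applies). The first-half/second-half ordering in the paper is then obtained by combining this monotone decay of $\mu$ with the graphically observed shape of $\gamma_{N_G}^{\alpha}$, not by any resonance or reflection-formula argument. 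Your reflection-formula discussion of the cascade at $\alpha_n=n/(n+1)$ is an interesting refinement, but it is not how the paper argues, and the stitching you describe as the ``main obstacle'' is simply not attempted there.
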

\begin{proof}
Notice by Assumption \eqref{eq:lmk1} that
\begin{gather}
L > e^{1-\gamma_{em}} \ge e^{\digamma(2-\alpha)} > 1-\alpha\quad \forall \alpha \in \FOmega_1,
\end{gather}
therefore, Assumption \eqref{eq:tss1} holds true. Notice also that $\mu(\alpha) = L^{1-\alpha}/\Gamma(2-\alpha)$ is strictly decreasing on $\FOmega_1$, since 
\[\frac{{d\mu }}{{d\alpha }} = \frac{{{L^{1 - \alpha }}}}{{\Gamma (2 - \alpha )}}\left[ {\digamma\left( {2 - \alpha } \right) - \ln L} \right] < 0\quad \forall \alpha \in \FOmega_1.\]
On the other hand, Figure \ref{fig:5} shows several portions of the $\gamma_{N_G}^{\alpha}$ graph on the $\alpha$-interval $(0, 1)$, where we note that, for $N_G \in \,\oset{\rightarrow}{\MBZP}$, the size of $\gamma_{N_G}^{\alpha}$ starts off large near $\alpha = 0$ and gradually increases for slightly increasing values of $\alpha$ before it gradually decreases as $\alpha$ moves towards the midpoint of the interval. At $\alpha = 1/2$, the ratio $\alpha/(1-\alpha) = 1$ and $\gamma_{N_G}^{\alpha}$ drops sharply and attains a minimum value $2$. The graph then shoots up wildly as we step off $\alpha = 1/2$ before it oscillates back and forth until $\alpha$ approaches the right boundary of the interval where the graph abruptly blows up. Combining these results with the fact that the size of $\gamma_{N_G}^{\alpha}$ is relatively larger on the first half of the $\alpha$ interval than its values in the second half, except in the small vicinity of the right boundary, we conclude that the FGPS errors are expected to be larger on the first half $\alpha$-interval than their values on the second half, except when $\alpha \to 1$, and the FGPS errors attain their minimum values exactly at $\alpha = 1/2$ when $N_G \to \infty$.
\end{proof}
Figures \ref{fig:6} and \ref{fig:10} emphasize Remarks (i) and (ii) and Theorem \ref{thm:hihi2} by displaying the maximum absolute error surfaces of the FGPS approximations for a spectrum of $(L,\alpha)$ values. The figures clearly show a decline in the precision of the FGPS approximations when $L$ or $N$ increases, $\alpha \to 1$, or $\alpha$ gradually moves away from zero to a certain limit in the first half of $\FOmega_1$. On the other hand, FGPS approximations generally improve as $\alpha \to 1/2$.

It is important to observe that this error behavior of FGPS approximations is often expected whenever the assumptions of Theorem \ref{thm:hihi2} hold true. However, the error growth may be significantly altered if any of these conditions are violated. Indeed, Figure \ref{fig:12} shows one possible scenario for what would happen when condition \eqref{eq:lmk1} is dropped. In particular, for $L = 1.52$ and $N = 100$, the FGPS errors generally grow monotonically with increasing values of $\alpha \in (0, 1)$ in contrast with the standard error pattern observed in the previous numerical simulations. 

\begin{figure}[t]
\centering
\includegraphics[scale=0.5]{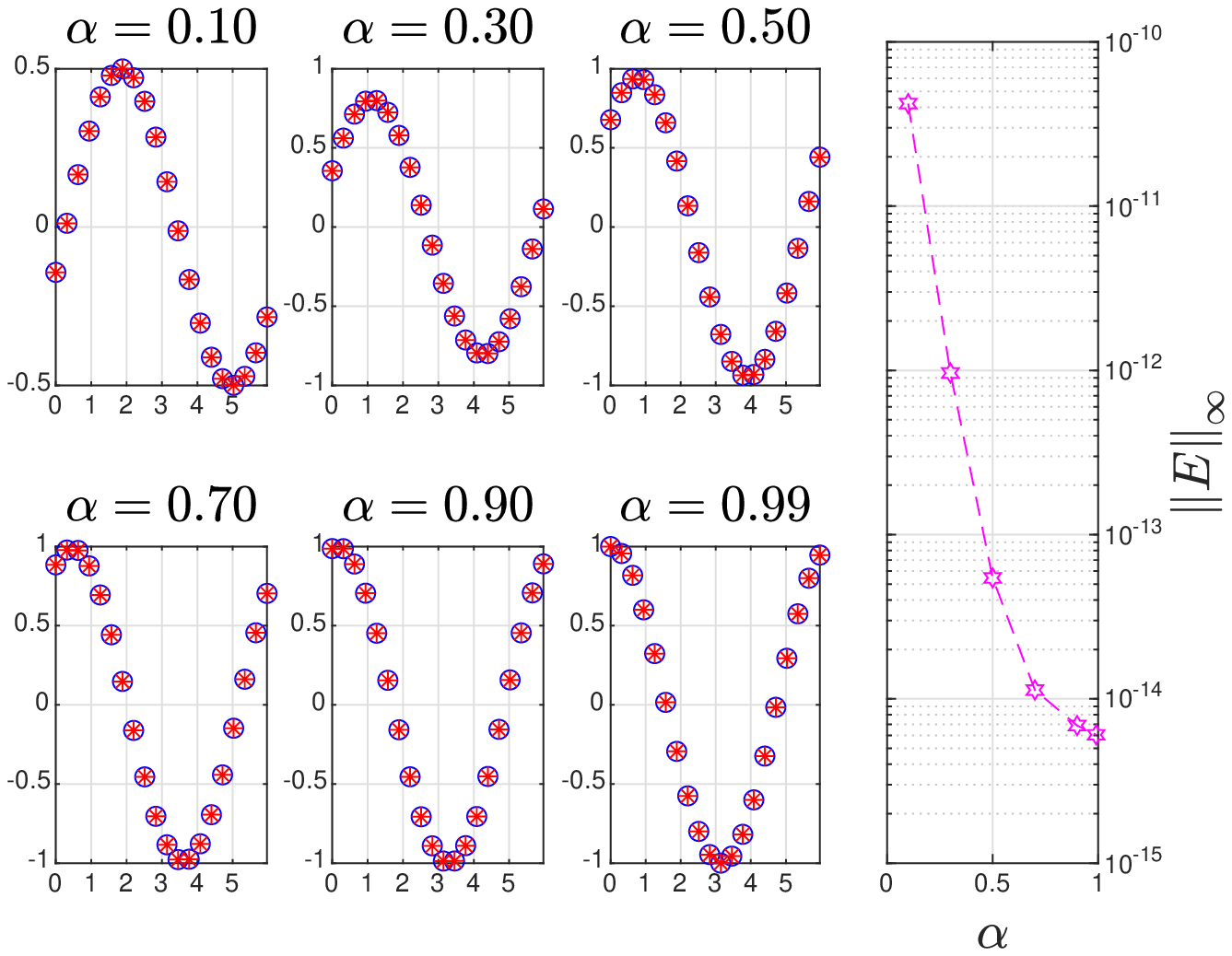}
\caption{The exact and FGPS approximate values of $\ED{L}{t}{\alpha}{\sin(t)}$ (Columns 1--3) and their corresponding maximum absolute errors (the 4th Column in log-lin scale) for $\alpha = 0.1:0.2:0.9, 0.99$. The FGPS approximations were obtained using $N = 20, L = 100,  N_G = 1000$, and $\lambda = 0$. The exact values are shown in red color with * marker symbol, while the FGPS approximations are shown in blue colors with o marker symbol.}
\label{fig:3}
\end{figure}

\begin{figure}[t]
\centering
\includegraphics[scale=0.5]{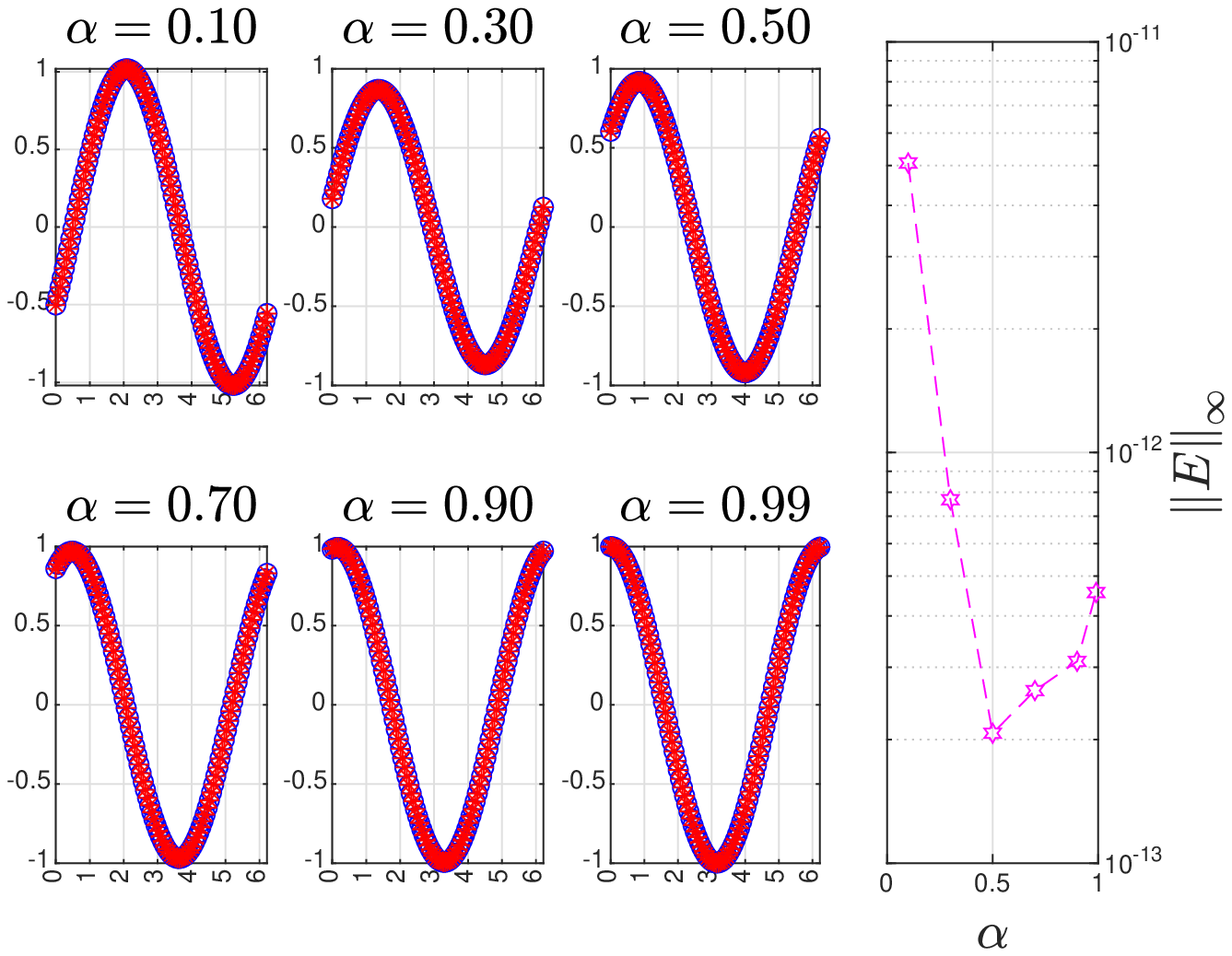}
\caption{The exact and FGPS approximate values of $\ED{L}{t}{\alpha}{\sin(t)}$ (Columns 1--3) and their corresponding maximum absolute errors (the 4th Column in log-lin scale) for $\alpha = 0.1:0.2:0.9, 0.99$. The FGPS approximations were obtained using $N = 100, L = 30,  N_G = 1000$, and $\lambda = 0$. The exact values are shown in red color with * marker symbol, while the FGPS approximations are shown in blue colors with o marker symbol.}
\label{fig:4}
\end{figure}

\begin{figure}[t]
\centering
\includegraphics[scale=0.25]{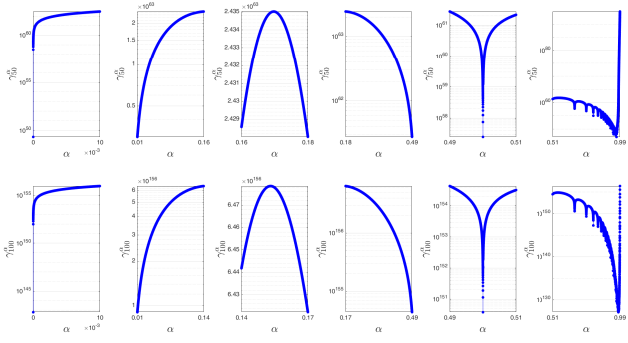}
\caption{The plots of $\gamma_{N_G}^{\alpha}$ in log-lin scale for several spectra of $\alpha$ values in $(0, 1)$. The first and second rows show the plots for $N_G = 50$ and $100$, respectively.}
\label{fig:5}
\end{figure}

\begin{figure}[t]
\centering
\includegraphics[scale=0.6]{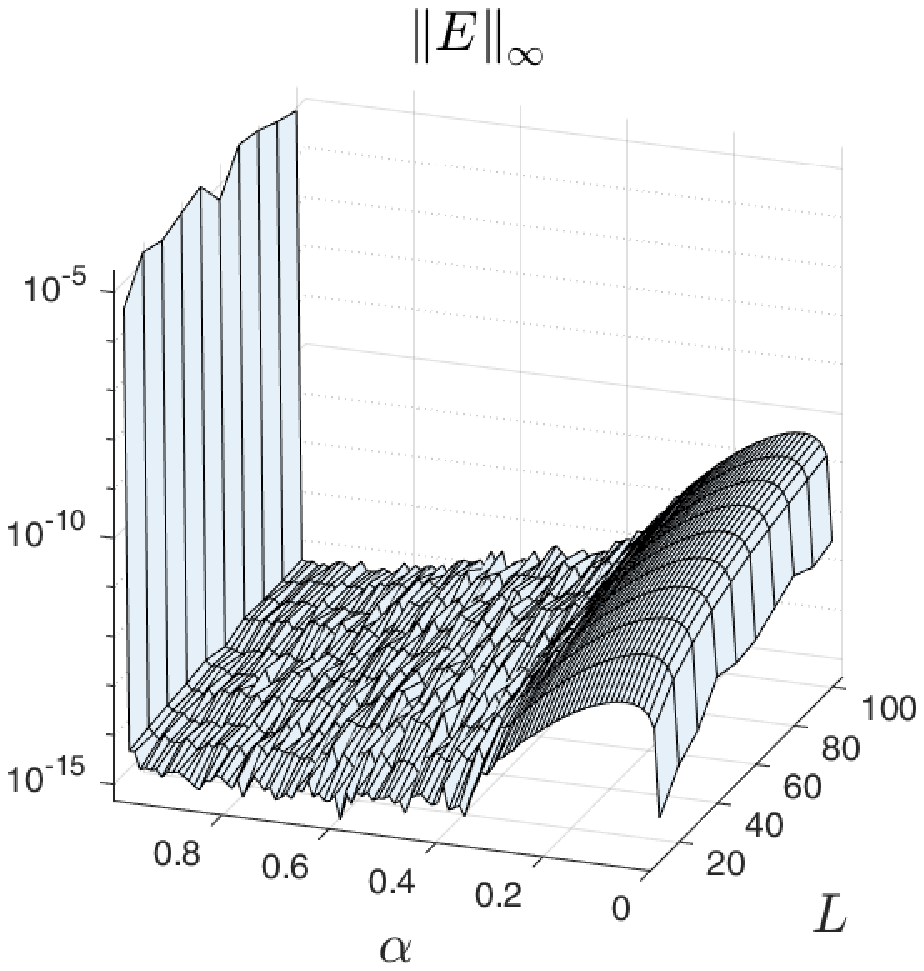}
\caption{The surface of the maximum absolute error as a function of the parameters $L$ and $\alpha$, while holding $N$ fixed at $20$. The 3D surface plot was generated above a grid in the $L\alpha$-plane defined by $L =$ 10:10:100 and a row vector of $100$ evenly spaced $\alpha$ values between $3 \epsilon_{\text{mach}}$ and $0.99999$. The axes of the $L\alpha$-plane are in lin-lin scale, while the $z$-axis is in logarithmic scale.}
\label{fig:6}
\end{figure}

\begin{figure}[t]
\centering
\includegraphics[scale=0.6]{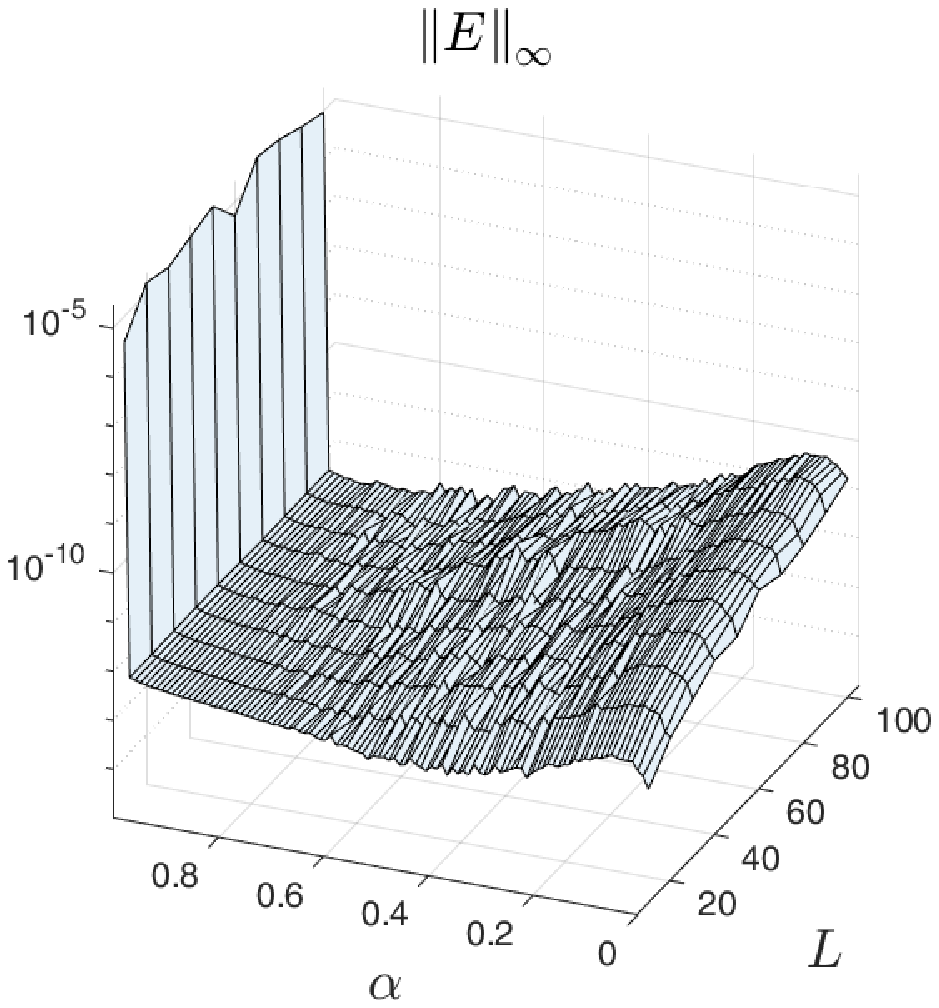}
\caption{The surface of the maximum absolute error as a function of the parameters $L$ and $\alpha$, while holding $N$ fixed at $100$. The 3D surface plot was generated above a grid in the $L\alpha$-plane defined by $L =$ 10:10:100 and a row vector of $100$ evenly spaced $\alpha$ values between $3 \epsilon_{\text{mach}}$ and $0.99999$. The axes of the $L\alpha$-plane are in lin-lin scale, while the $z$-axis is in logarithmic scale.}
\label{fig:10}
\end{figure}

\begin{figure}[t]
\centering
\includegraphics[scale=0.5]{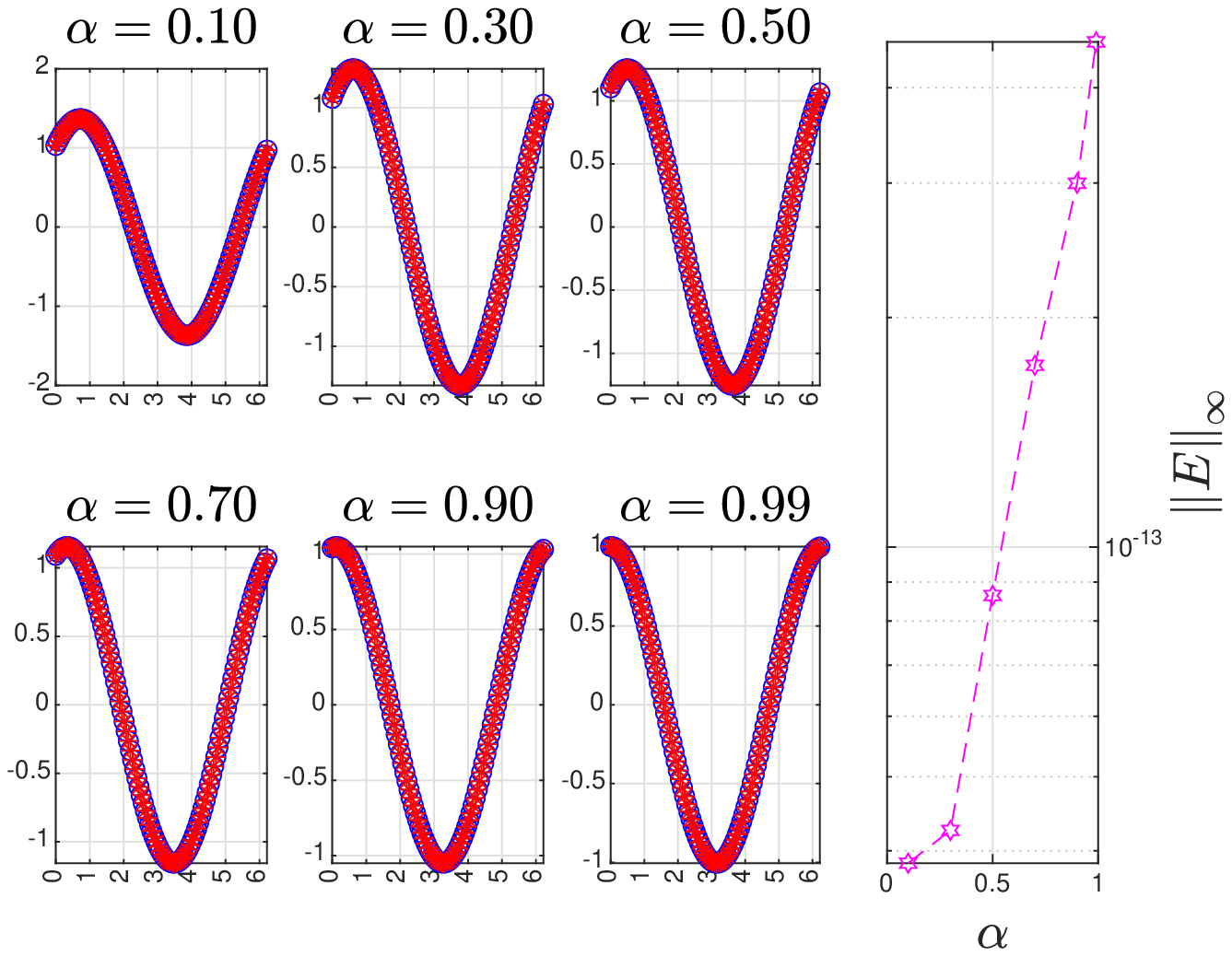}
\caption{The exact and FGPS approximate values of $\ED{L}{t}{\alpha}{\sin(t)}$ (Columns 1--3) and their corresponding maximum absolute errors (the 4th Column in log-lin scale) for $\alpha = 0.1:0.2:0.9, 0.99$. The FGPS approximations were obtained using $N = 100, L = 1.52,  N_G = 1000$, and $\lambda = 0$. The exact values are shown in red color with * marker symbol, while the FGPS approximations are shown in blue colors with o marker symbol.}
\label{fig:12}
\end{figure}

\section{The FGPS Method}
\label{sec:FGPM1}
The collocation of PFOCP \eqref{eq:OC1} in the Fourier physical space at the set of mesh points $\MBS_N^T$ with the aid of \cite[Eq. (4.9)]{Elgindy2023a}, and following the writing convention presented in Section \ref{sec:PN}, allow us to easily approximate the performance index through the Fourier quadrature using the following formula:
\begin{equation}\label{eq:Dperind1}
J \approx J_N = \frac{1}{N} \bmone_N^{\top}\,g(\bmx(\bmt_N), \bmu(\bmt_N), \bmt_N).
\end{equation}
The discrete fractional dynamical system of equations now reads
\begin{equation}\label{eq:DiscDynSysK1}
\ED{L}{\bmt_{N}}{\alpha}{\bmx(t)} \approx \bmf(\bmx(\bmt_N), \bmu(\bmt_N), \bmt_N),
\end{equation}
where 
\begin{align}
&\ED{L}{\bmt_{N}}{\alpha}{\bmx(t)} = \left[\ED{L}{\bmt_N}{\alpha}x_1(t), \ldots, \ED{L}{\bmt_N}{\alpha}x_{n_x}(t)\right]:\\
&\ED{L}{\bmt_N}{\alpha}x_j(t) \approx \frac{L^{1-\alpha}}{\Gamma(2-\alpha)} \left({}_{L}^E\bsC{Q}_{N_G}^{\alpha}\,x_j(\bmt_N)\right)\quad \forall j \in \MBN_{n_x},
\end{align}
and the discrete inequality constraints can be written as
\begin{equation}\label{eq:hhineqcons1}
\bmc(\bmx(\bmt_N),\bmu(\bmt_N),\bmt_N)) \le \F{O}_{N,p}.
\end{equation}
The PFOCP is now converted into a constrained nonlinear programming problem (NLP) in which the goal is to minimize the discrete performance index \eqref{eq:Dperind1} subject to the equality constraints \eqref{eq:DiscDynSysK1} and the inequality constraints \eqref{eq:hhineqcons1}. We solve the constrained NLP for the unknowns $\bmx(\bmt_N)$ and $\bmu(\bmt_N)$ using MATLAB fmincon solver with the interior-point algorithm, and the approximate optimal state and control variables can then be calculated at any point $t \in \FOmega_T$ through the Fourier PS expansions
\begin{equation}\label{eq:17}
\bmx(t) = \bmx\left(\bmt_N^{\top}\right) \bsC{F}_{N}(t)\quad \text{and}\quad\bmu(t) = \bmu\left(\bmt_N^{\top}\right) \bsC{F}_{N}(t),
\end{equation}
where $\bsC{F}_{N}(t) = \C{F}_{0:N-1}\cancbra{t}$. We refer to the FGPS method applied together with fmincon solver by the FGPS-fmincon method; moreover, the approximate optimal state and control variables obtained by this method are denoted by by $\tbmxs$ and $\tbmus$, respectively.

\section{Numerical Simulations}
\label{sec:NS1}
In this section, we demonstrate the performance of the proposed FGPS-fmincon method for solving the PFOCP \eqref{eq:OC1} with ${g}(\bmx(t), \bmu(t)$, $t) = 0.5 x_1^2$ $+0.25 x_2^4-0.5 x_2^2 + 0.12375 u^2$, $\bmf\left(\bmx(t),\bmu(t),t\right) = [x_2; u]$, $T = 4.431736$, and free periodic boundary conditions. The computations were carried out using MATLAB R2023a software installed on a personal laptop equipped with a 2.9 GHz AMD Ryzen 7 4800H CPU and 16 GB memory running on a 64-bit Windows 11 operating system. The fmincon solver was performed with initial guesses of all tens and was stopped whenever
\[\left\| {{\bmX^{(k + 1)}} - {\bmX^{(k)}}} \right\|_2 < {10^{ - 15}}\quad \text{or}\quad \left\|J_N^{(k+1)} - J_N^{(k)} \right\|_2 < {10^{ - 15}},\]
where $\bmX^{(k)} = [\bmx^{(k)}; \bmu^{(k)}]$ and $J_N^{(k)}$ denote the concatenated vector of approximate NLP minimizers and optimal cost function value at the $k$th iteration, respectively. We measure the quality of the approximations using the absolute discrete feasibility error (ADFE) at the collocation points; cf. \cite{Elgindy2023b}, denoted by $\bm{\C{E}}_N = (\C{E}_i)_{0 \le i \le N n_x-1}$, and given by
\begin{equation}\label{eq:DiscDynSysK2}
\boldsymbol{\C{E}}_N = \left|\ED{L}{\bmt_{N}}{\alpha}{\bmx(t)} - \bmf(\bmx(\bmt_N), \bmu(\bmt_N), \bmt_N)\right|.
\end{equation}
For $\alpha = 1$, the $\pi$-test guarantees the existence of periodic solutions for the integer OCP, which improves the performance index more than static extremal solutions \cite{Elnagar2004707,Elgindy2023b}. To the best of our knowledge, this problem has no exact solution even for $\alpha = 1$ and must be treated numerically. We acknowledge the work of \citet{evans1987solution} using the Lindstedt-Poincar\'{e} asymptotic series expansion and later the works of \citet{Elnagar2004707} and \citet{Elgindy2023b} using the FPS and FIPS methods, respectively. However, all  these early works were concerned with the integer OCP form associated with $\alpha = 1$. Figure \ref{fig:Fig9} shows the profiles of the approximate optimal state and control variables and the DFEs obtained at the collocation nodes set $\MBS_{12}^{4.431736}$ using the FGPS-fmincon method with the parameter values $N = 12, \alpha = 0.99, L = 30, N_G = 40$, and $\lambda = 0$. The approximate optimal performance index value obtained by the current method is $J_N \approx -4.18881033 \times 10^{-06}$, which is sufficiently close to the earlier value $-4.18880200 \times 10^{-06}$ obtained in \cite{Elnagar2004707} at $\alpha = 1$ with negligible DFEs approaching the machine epsilon. Figure \ref{fig:Fig10} shows the corresponding results when $\alpha = 1/2$ using $N_G = 1000$ and the same other parameter settings. The calculated approximate optimal performance index in this case is $J_N \approx 4.65023545 \times 10^{-18}$. The elapsed times required to run the FGPS-fmincon method using $\alpha = 1/2, 0.99$ and $N_G = 40$ were approximately $0.116$ s and $0.157$ s, respectively. The development stages of the approximate optimal state and control variables generated by the FGPS-fmincon method for some increasing values of $\alpha$ are shown in Figure \ref{fig:Fig11}.

\begin{figure}
\centering
\includegraphics[scale=0.24]{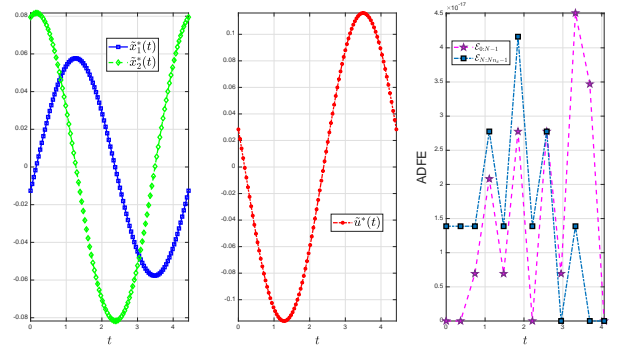}
\caption{The profiles of the approximate optimal state and control variables and the ADFEs obtained at the collocation nodes set $\MBS_{12}^{4.431736}$ using the FGPS-fmincon method with the parameter values $N = 12, \alpha = 0.99, L = 30, N_G = 40$, and $\lambda = 0$. The plots of the state and control variables were generated using $100$ linearly spaced nodes in $\FOmega_{4.431736}$.}
\label{fig:Fig9}
\end{figure}

\begin{figure}
\centering
\includegraphics[scale=0.24]{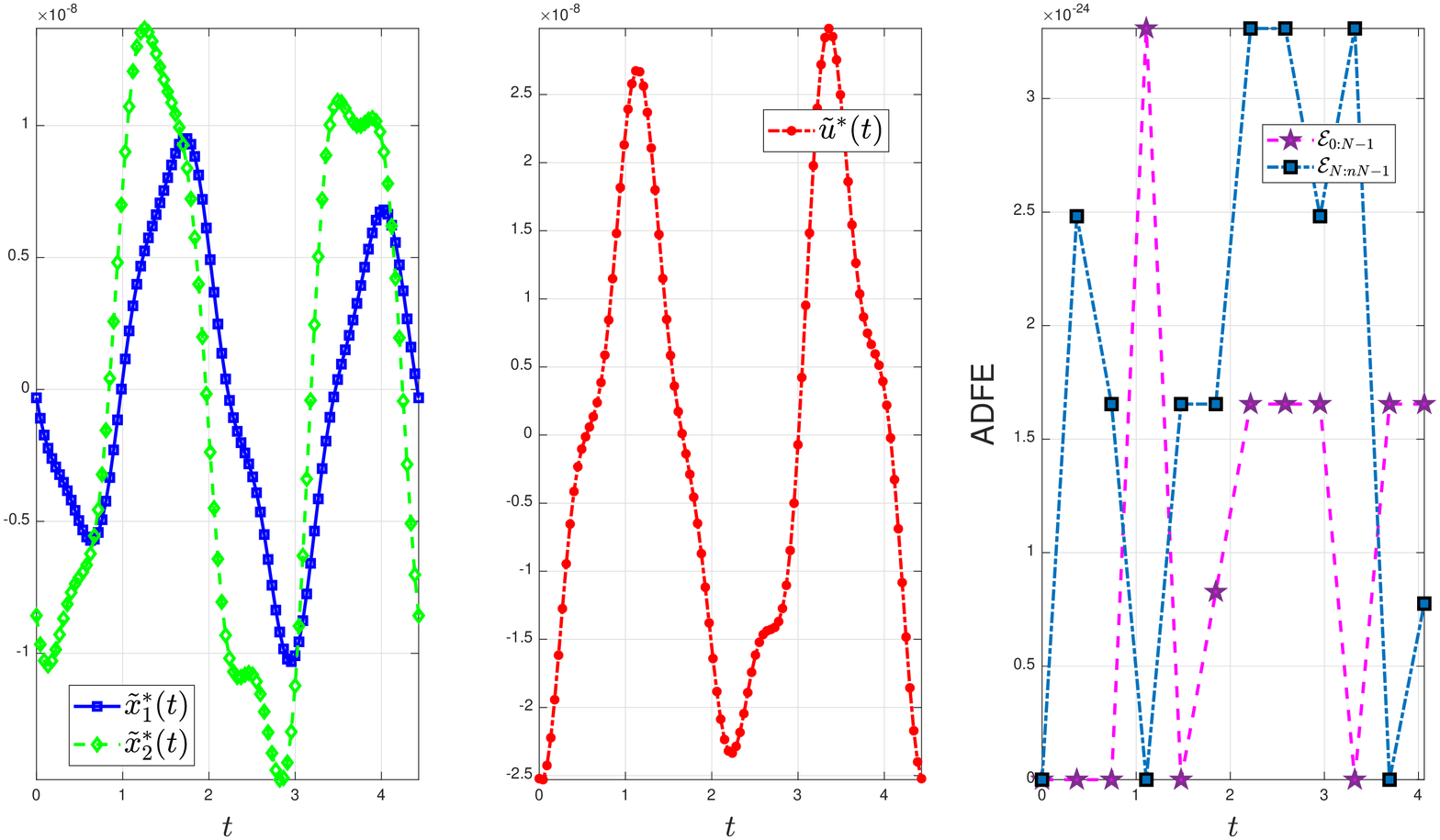}
\caption{The profiles of the approximate optimal state and control variables and the ADFEs obtained at the collocation nodes set $\MBS_{12}^{4.431736}$ using the FGPS-fmincon method with the parameter values $N = 12, \alpha = 0.5, L = 30, N_G = 1000$, and $\lambda = 0$. The plots of the state and control variables were generated using $100$ linearly spaced nodes in $\FOmega_{4.431736}$.}
\label{fig:Fig10}
\end{figure}

\begin{figure}
\centering
\includegraphics[scale=0.24]{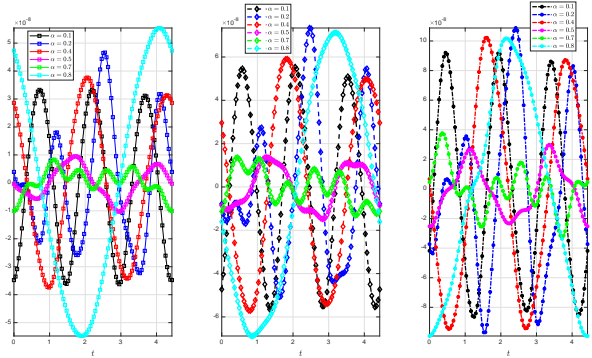}
\caption{The evolution of the approximate optimal state and control variables $\txs_1$ (left), $\txs_2$ (middle), and $\tus$ (right) for $\alpha = 0.1, 0.2, 0.4, 0.5, 0.7, 0.8$ obtained using the FGPS-fmincon method with the parameter values $N = 12, L = 30, N_G = 1000$, and $\lambda = 0$. The plots were generated using $100$ linearly spaced nodes in $\FOmega_{4.431736}$.}
\label{fig:Fig11}
\end{figure}

\section{Conclusion}
\label{sec:Conc}
This study introduced a new class of PFOCPs using periodic fractional operators. The paper also presented an accurate, efficient, and easy-to-implement numerical method for solving PFOCPs using Fourier and Gegenbauer pseudospectral methods. The proposed change of variables \eqref{eq:Elg1} largely simplifies the calculation of the periodic FD of a periodic function $f$ into the problem of evaluating the integral of the first derivative of the trigonometric Lagrange interpolating polynomial associated with the Fourier interpolation of $f$, which can be evaluated using Gegenbauer quadratures with high accuracy and efficiency. Another important contribution of this work is the introduction of the FGPSFIM ${}_{L}^E\bsC{Q}^{\alpha}_{N_G}$, which can deliver very accurate and efficient approximations to periodic FDs of periodic functions at any set of mesh points in their domains. The FGPSFIM was also shown to be a Toeplitz matrix, which can be constructed rapidly, requiring only to specify its first column and first row. A third significant contribution of this study is the derivation of Theorem \ref{thm:Jan212022}, which is extremely useful in the sense that it can predict the quality of FGPS approximations to FDs, in advance, when changing the number of Fourier interpolant modes $N$, the memory length of the periodic fractional differentiation operator $L$, the order of the fractional differentiation $\alpha$, or the size of the GG quadrature nodes grid $N_G$, while holding the other parameters fixed. The numerical results of the benchmark PFOCP demonstrated the excellent accuracy and rapid convergence of the proposed FGPS method. 

\section{Future Work}
\label{sec:FW1}
The bulk of the results in this study apply for $0 < \alpha < 1$; however, the current work can be easily extended to include PFOCPs governed by higher-order FDEs. Another future research direction includes analyzing the behavior of the error factor $\gamma_{N_G}^{\alpha}$ mathematically to provide more support to the graphical analysis of its behavior that was prescribed in the proof of Theorem \ref{thm:hihi2}. A third useful future direction is to derive the necessary conditions of optimality of the solutions of Problem \eqref{eq:OC1} using Pontryagin's Maximum Principle in order to determine its exact solution, if exists, and aid in comparing the performance of competitive numerical algorithms employed to solve it.

\section*{Declarations}
\subsection*{Competing Interests}
The author declares there is no conflict of interests.

\subsection*{Availability of Supporting Data}
The author declares that the data supporting the findings of this study are available within the article.

\subsection*{Ethical Approval and Consent to Participate and Publish}
Not Applicable.

\subsection*{Human and Animal Ethics}
Not Applicable.

\subsection*{Consent for Publication}
Not Applicable.

\subsection*{Funding}
The author received no financial support for the research, authorship, and/or publication of this article.

\subsection*{Authors' Contributions}
The author confirms sole responsibility for the following: study conception and design, data collection, analysis and interpretation of results, and manuscript preparation.

\bibliographystyle{model1-num-names}
\bibliography{Bib}
\end{document}